\newtheorem{theorem}{Theorem}[section]
\newtheorem{lemma}[theorem]{Lemma}
\newtheorem{proposition}[theorem]{Proposition}
\theoremstyle{definition}
\newtheorem{definition}[theorem]{Definition}
\newtheorem{example}[theorem]{Example}
\newtheorem{remark}[theorem]{Remark}
\newtheorem*{remark*}{Remark}
\newcommand{\ZZ}{\mathbb{Z}}
\newcommand{\XX}{\mathbb{X}}
\newcommand{\YY}{\mathbb{Y}}
\newcommand{\QQ}{\mathbb{Q}}
\renewcommand{\th}{\widetilde{h}}
\newcommand{\tv}{\widetilde{v}}
\newcommand{\tX}{\widetilde{X}}
\DeclareMathOperator{\Aut}{Aut}
\DeclareMathOperator{\Ker}{Ker}
\DeclareMathOperator{\Coker}{Coker}
\DeclareMathOperator{\Hom}{Hom}
\let\Im\relax
\DeclareMathOperator{\Im}{Im}
\newcommand{\calO}{\mathcal{O}}
\newcommand{\calX}{\mathcal{X}}
\DeclareMathOperator{\tr}{tr}
\DeclareMathOperator{\adj}{adj}
\DeclareMathOperator{\Prin}{Prin}
\DeclareMathOperator{\val}{val}
\DeclareMathOperator{\Id}{Id}
\DeclareMathOperator{\Jac}{Jac}
\newcommand{\dil}{\mathrm{dil}}
\newcommand{\fr}{\mathrm{fr}}
\newcommand{\ep}{\varepsilon}
\title{Chip-firing on graphs of groups}
\author{Margaret Meyer, Dmitry Zakharov}
\begin{document}

\begin{abstract} We define the Laplacian matrix and the Jacobian group of a finite graph of groups. We prove analogues of the matrix tree theorem and the class number formula for the order of the Jacobian of a graph of groups. Given a group $G$ acting on a graph $X$, we define natural pushforward and pullback maps between the Jacobian groups of $X$ and the quotient graph of groups $X/\!/G$. For the case $G=\ZZ/2\ZZ$, we also prove a combinatorial formula for the order of the kernel of the pushforward map.

\end{abstract}

\maketitle

\section{Introduction}

The theory of chip-firing on graphs is a purely combinatorial theory, having a remarkable similarity to divisor theory on algebraic curves. A divisor on a graph is an integer linear combination of its vertices, and two divisors are linearly equivalent if one is obtained from another by a sequence of chip-firing moves. The set of equivalence classes of degree zero divisors on a graph $X$ is a finite abelian group, called the \emph{Jacobian} $\Jac(X)$ or the \emph{critical group} of $X$. The similarity with algebraic geometry is not accidental: graphs record degeneration data of one-dimensional families of algebraic curves, and divisors on graphs represent discrete invariants of algebraic divisor classes under degeneration. 

Chip-firing on graphs is functorial with respect to a class of graph maps known as \emph{harmonic morphisms}, which may be viewed as discrete analogues of finite maps of algebraic curves. Specifically, a harmonic morphism of graphs $f:X\to Y$ defines natural pushforward and pullback maps $f_*:\Jac(X)\to \Jac(Y)$ and $f^*:\Jac(Y)\to \Jac(X)$. Harmonic morphisms are characterized by a local degree assignment at the vertices of the source graph, and are a generalization of topological coverings, which have local degree one everywhere. 

A natural example of a topological covering, and hence of a harmonic morphism, is the quotient $p:X\to X/G$ of a graph $X$ by a free action of a group $G$. The paper~\cite{2014ReinerTseng} thoroughly investigated the corresponding pushforward map $p_*:\Jac(X)\to \Jac(X/G)$, and found a combinatorial formula for the degree of the kernel in the case when $G=\ZZ/2\ZZ$. If the action of $G$ on $X$ has nontrivial stabilizers, however, then $p$ is not in general harmonic, and there is no relationship between $\Jac(X)$ and $\Jac(X/G)$. This raises the natural problem of redefining chip-firing on the quotient graph in a way that preserves functoriality.

In this paper, we solve this problem using the theory of \emph{graphs of groups}, also known as Bass--Serre theory (see~\cite{1993Bass} and~\cite{2002Serre}). Given a $G$-action on a graph $X$, the \emph{quotient graph of groups} $X/\!/G$ consists of the quotient graph $X/G$ together with the data of the local stabilizers, and may be thought of as the stacky quotient of $X$ by $G$. We define the Laplacian matrix and the Jacobian group of a graph of groups by weighting the chip-firing map using the orders of the local stabilizers. We define natural pushforward and pullback maps $p_*:\Jac(X)\to \Jac(X/\!/G)$ and $p^*:\Jac(X/\!/G)\to \Jac(X)$, and we investigate their properties. 

The paper is organized as follows. In Section 2, we recall the definitions of chip-firing for a graph, as well as harmonic morphisms of graphs and Bass--Serre theory. We define graphs and chip-firing in terms of \emph{half-edges} and introduce a detailed factorization of the graph Laplacian. This approach is notationally cumbersome but proves useful in Section 3, where we define chip-firing and the Jacobian group for a graph of groups. We prove two formulas for the order of the Jacobian of a graph of groups: Theorem~\ref{thm:matrixtree}, a weighted version of Kirchhoff's matrix tree theorem, and Theorem~\ref{thm:zeta}, which is a class number formula involving a hypothetical Ihara zeta function of a graph of groups. In Section 4, we consider a group $G$ acting on a graph $X$ and consider the Jacobian of the quotient graph of groups $X/\!/G$. We define natural pushforward and pullback maps between the Jacobians $\Jac(X)$ and $\Jac(X/\!/G)$. We compute the Jacobians of all group quotients of two graphs with large automorphism groups: the complete graph on four vertices and the Petersen graph. Finally, in Section 5 we specialize to the case $G=\ZZ/2\ZZ$ and find a combinatorial formula for the order of the kernel of the pushforward map $p_*:\Jac(X)\to \Jac(X/\!/G)$, generalizing a result of Reiner and Tseng~\cite{2014ReinerTseng}. 

A natural question is to relate chip-firing on graphs of groups to algebraic geometry. A version of the chip-firing maps with edge weights (but trivial vertex weights) appears in~\cite{2015Chiodo} and~\cite{2017ChiodoFarkas}, in the study of moduli spaces of curves with level structure. Curves with a $G$-cover with arbitrary group $G$ are considered in~\cite{2019Galeotti}. It is natural to assume that chip-firing on graphs of groups should be related to the theory of line bundles on stacky curves. Investigating this connection, however, is beyond the scope of this paper.

\section{Graphs with legs and graphs of groups}

We begin by recalling a number of standard definitions concerning graphs, group actions, divisor theory on graphs, harmonic morphisms, and graphs of groups.

\subsection{Graphs, morphisms, and group actions} In Serre's definition (see~\cite{2002Serre}), the edges of a graph are the orbits of a fixed-point-free involution acting on a set of \emph{half-edges}. When considering group actions on graphs, it is then necessary to require that the action not flip any edges of the graph. We can relax this constraint by allowing the involution on the set of half-edges to have fixed points. The resulting object is a \emph{graph with legs}, where a leg is the result of folding an edge in half via an involution. Such objects have appeared before in the combinatorics literature (for example, see p.~60 in the paper~\cite{1982Zaslavsky}, where they are called \emph{half-arcs}).

\begin{definition} A \emph{graph with legs} $X$, or simply a \emph{graph}, consists of the following data:
\begin{enumerate}
    \item A set of \emph{vertices} $V(X)$.
    \item A set of \emph{half-edges} $H(X)$.
    \item A \emph{root map} $r_X:H(X)\to V(X)$.
    \item An involution $\iota_X:H(X)\to H(X)$.
\end{enumerate}
The involution $\iota_X$ partitions $H(X)$ into orbits of size one and two. An orbit $e=\{h,h'\}$ of size two (so that $\iota_X(h)=h'$) is an \emph{edge} with \emph{root vertices} $r_X(h),r_X(h')\in V(X)$, and the set of edges of $X$ is denoted $E(X)$. An edge whose root vertices coincide is called a \emph{loop}. A fixed point of $\iota_X$ is called a \emph{leg} and has a single root vertex $r_X(h)\in V(X)$, and we denote the set of legs of $X$ by $L(X)$. The \emph{tangent space} $T_vX=r_X^{-1}(v)$ of a vertex $v\in V(X)$ is the set of half-edges rooted at $v$, and its \emph{valency} is $\val(v)=|T_vX|$ (so a leg is counted once, while a loop is counted twice). An \emph{orientation} of an edge $e=\{h,h'\}$ is a choice of order $(h,h')$ on the half-edges, and we call $s(e)=r_X(h)$ and $t(e)=r_X(h')$ respectively the \emph{initial} and \emph{terminal} vertices of an oriented edge $e$. An \emph{orientation} $\calO$ on $X$ is a choice of orientation for each edge (each leg has a unique orientation). We consider only finite connected graphs.

\end{definition}

\begin{definition} A \emph{morphism of graphs} $f:\tX\to X$ is a pair of maps $f:V(\tX)\to V(X)$ and $f:H(\tX)\to H(X)$ (both denoted $f$ by abuse of notation) that commute with the root and involution maps on $\tX$ and $X$.

\end{definition}

Let $f:\tX\to X$ be a morphism of graphs. If $l\in L(\tX)$ is a leg then $\iota_X(f(l))=f(\iota_{\tX}(l))=f(l)$, so $f(l)\in L(X)$ is also a leg. On the other hand, if $e=\{h,h'\}\in E(\tX)$ is an edge, then either $f(h)\neq f(h')$, in which case $f$ maps $e$ to an edge $f(e)=\{f(h),f(h')\}\in E(X)$, or $f(h)=f(h')\in L(X)$ is a leg. In other words, edges can map to edges or fold to legs. However, we do not allow morphisms to contract edges or half-legs, in other words we consider only \emph{finite} morphisms.

\begin{definition} Let $X$ be a graph and let $G$ be a group acting on the right on $X$. In other words, each $g\in G$ defines an automorphism of $X$, which we denote $x\mapsto xg$ for $x\in V(X)\cup H(X)$, such that $x(g_1g_2)=(xg_1)g_2$ for all $x\in V(X)\cup H(X)$ and all $g_1,g_2\in G$. We define the vertices and half-edges of the \emph{quotient graph} $X/G$ as the $G$-orbits of $V(X)$ and $H(X)$:
\[
V(X/G)=V(X)/G=\{vG:v\in V(X)\},\quad 
H(X/G)=H(X)/G=\{hG:h\in H(X)\},
\]
and descending the root and involution maps:
\[
r_{X/G}(hG)=r_X(h)G,\quad \iota_{X/G}(hG)=\iota_X(h)G.
\]
The quotient projection $p:X\to X/G$ sends each element of $X$ to its orbit.

Let $h\in H(X)$ be a half-edge with orbit $p(h)=hG\in H(X/G)$. If $h$ is a leg, then $\iota_{X/G}(hG)=\iota_{X}(h)G=hG$ so $p(h)=hG\in L(X/G)$ is also a leg. If $h$ belongs to an edge $e=\{h,h'\}\in E(X)$, then there are two possibilities. If $h'\neq hg$ for all $g\in G$, then the orbits $hG$ and $h'G$ are distinct half-edges of $X/G$ forming an edge $p(e)=\{hG,h'G\}\in E(X/G)$. However, if $h'=hg$ for some $g\in G$ (in other words, if the $G$-action \emph{flips the edge $e$}), then $p(e)=hG=h'G\in L(X/G)$ is a leg.

\end{definition}

In Serre's original definition, the involution $\iota_X$ on a graph $X$ is required to be fixed-point-free, and hence the set $H(X)$ of half-edges is partitioned into edges only. Relaxing this condition enables us to consider quotients by group actions that flip edges. We give a simple example below and two extended examples in Sections~\ref{subsec:tetrahedron} and~\ref{subsec:petersen}.

\begin{example} Let $X$ be the graph with two vertices joined by an edge. There is a unique nontrivial morphism $f:X\to X$ exchanging the two vertices, so $\Aut(X)$ is the cyclic group of order two. The quotient $X/\Aut(X)$ is the graph having one leg at one vertex, and is in fact the terminal object in the category of graphs with legs, while no such object exists in the category of graphs. 

\end{example}


\subsection{The graph Laplacian and chip-firing.} We now recall divisor theory on a graph $X$. We follow the framework of the paper~\cite{2014ReinerTseng}, which we reformulate in terms of half-edges. Specifically, we use a detailed factorization of the Laplacian which can be conveniently generalized to graphs of groups. A minor additional advantage is that we are never required to pick an orientation for the graph.

For a set $S$, we denote $\ZZ^S$ and $\ZZ^S_0$ respectively the free abelian group on $S$ and the subgroup consisting of elements whose coefficients sum to zero. The free abelian group $\ZZ^{V(X)}$ is called the \emph{divisor group} of $X$, and a \emph{divisor} $D=\displaystyle\sum_{v\in V(X)} a_v v$ is interpreted as a distribution of $a_v$ chips on each vertex $v$. The root and involution maps $r_X:H(X)\to V(X)$ and $\iota_X:H(X)\to H(X)$ induce homomorphisms
\[
r_X:\ZZ^{H(X)}\to \ZZ^{V(X)},\quad\iota_X:\ZZ^{H(X)}\to \ZZ^{H(X)}
\]
on the corresponding free abelian groups (denoted by the same letters by abuse of notation). Let $\tau_X$ denote the transpose of $r_X$:
\begin{equation}
\tau_X:\ZZ^{V(X)}\to \ZZ^{H(X)},\quad \tau_X(v)=\sum_{h\in T_vX}h.
\label{eq:tau}
\end{equation}

\begin{definition} The \emph{Laplacian} of a graph $X$ is the homomorphism $L_X:\ZZ^{V(X)}\to \ZZ^{V(X)}$ given by
\begin{equation}
L_X=r_X\circ(Id-\iota_X)\circ \tau_X,\quad L_X(v)=\sum_{h\in T_vX}(v-r_X(\iota_X(h))).
\label{eq:LX}
\end{equation}
\end{definition}

\begin{figure}
    \centering
\begin{tikzcd}
\ZZ^{H(X)}\arrow[r,bend left,"r_X"] \arrow[loop left,"\iota_X"]& \ZZ^{V(X)} \arrow[l,bend left,"\tau_X"'] \arrow[loop right,"L_X"]
\end{tikzcd}
    \caption{Factorization of the graph Laplacian.}
    \label{fig:1}
\end{figure}
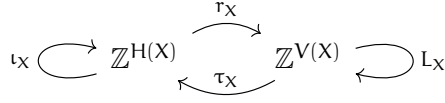



Figure~\ref{fig:1} displays all the maps involved in defining the graph Laplacian. It is elementary to verify that $\Im L_X\subset \ZZ^{V(X)}_0$, where $\Im L_X$ is the subgroup of \emph{principal divisors} on $X$, and in fact $\ZZ^{V(X)}_0=\Im (r_X\circ(Id-\iota_X))$ if the graph $X$ is connected.

\begin{definition} The \emph{Jacobian} of a graph $X$ is the quotient group
\[
\Jac(X)=\ZZ^{V(X)}_0/\Im L_X.=\Im(r_X\circ (\Id-\iota_X))/\Im L_X.
\]
\end{definition}

The Jacobian $\Jac(X)$ is also known as the \emph{critical group} of $X$. Kirchhoff's matrix-tree theorem states that $\Jac(X)$ is a finite group whose order is equal to the number of spanning trees of $X$. Given a vertex $v\in V(X)$, the divisor $-L_X(v)$ is obtained by \emph{firing the vertex} $v$, in other words by moving a chip from $v$ along each half-edge $h\in T_vX$ to the root vertex of $\iota_X(h)$. Chips moved along legs and loops return to $v$, hence legs and loops of $X$ do not contribute to the Laplacian or the Jacobian group, and $\Jac(X)$ is canonically isomorphic to the Jacobian of the graph obtained by removing all legs and loops. However, legs and loops naturally occur when taking quotients by group actions, so we nevertheless consider them.

We give an explicit presentation for the matrix $L$ of the graph Laplacian $L_X$. Let $n=|V(X)|$ and $m=|E(X)|$ denote the number of vertices and edges, respectively. Then $L=Q-A$, where $Q$ and $A$ are the $n\times n$ \emph{valency} and \emph{adjacency matrices} of $X$:
\[
L_{uv}=Q_{uv}-A_{uv},\quad
Q_{uv}=\delta_{uv}\val(v),\quad A_{uv}=|\{h\in T_vX:r_X(\iota_X(h))=u\}|.
\]
These matrices have the following convenient factorizations. Pick an orientation on $X$ and define the $n\times m$ \emph{root matrices}
\begin{equation}
    S_{ve}=\left\{\begin{array}{cc} 1, & s(e)=v, \\ 0, & s(e)\neq v, \end{array}\right.,\quad 
T_{ve}=\left\{\begin{array}{cc} 1, & t(e)=v, \\ 0, & t(e)\neq v. \end{array}\right.
\label{eq:adjacency}
\end{equation}
It is then easy to verify that 
\[
Q=SS^t+TT^t,\quad A=ST^t+TS^t,\quad L=Q-A=(S-T)(S-T)^t.
\]

\subsection{Harmonic morphisms of graphs} Given a morphism of graphs $f:\tX\to X$, there is generally no relationship between $\Jac(\tX)$ and $\Jac(X)$. However, we can define functoriality with respect to a class of graph morphisms that admit a local degree function on the vertices of the source graph (see~\cite{2000Urakawa} and ~\cite{2009BakerNorine}). 

\begin{definition} A graph morphism $f:\tX\to X$ is called \emph{harmonic} if there exists a function $d_f:V(\tX)\to \ZZ$, called the \emph{local degree}, such that for any $\tv\in V(\tX)$ and any $h\in T_{f(\tv)}X$ we have
\[
d_f(\tv)=\left|\left\{\th\in T_{\tv}\tX:f(\th)=h\right\}\right|.
\]
\end{definition}

For example, a covering space $f:\tX\to X$ (in the topological sense) is the same thing as a harmonic morphism with $d_f(\tv)=1$ for all $\tv\in V(\tX)$. If $X$ is connected, then any harmonic morphism $f:\tX\to X$ has a \emph{global degree} equal to
\[
\deg(f)=\sum_{\tv\in f^{-1}(v)}d_f(\tv)=|f^{-1}(h)|
\]
for any $v\in V(X)$ or any $h\in H(X)$. In particular, any harmonic morphism to a connected graph is surjective (on the edges and the vertices).

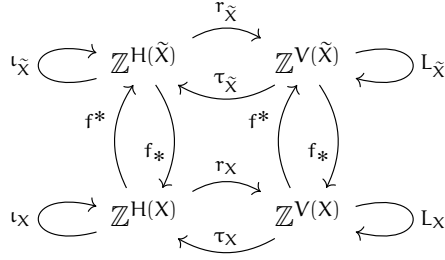
\begin{figure}
    \centering
\begin{tikzcd}
\ZZ^{H(\tX)}\arrow[r,bend left,"r_{\tX}"] \arrow[dd,bend left,"f_*"'] \arrow[loop left,"\iota_{\tX}"]& \ZZ^{V(\tX)} \arrow[l,bend left,"\tau_{\tX}"'] \arrow[dd,bend left,"f_*"']\arrow[loop right,"L_{\tX}"] \\
 & \\
\ZZ^{H(X)}\arrow[r,bend left,"r_X"] \arrow[uu,bend left,"f^*"] \arrow[loop left,"\iota_X"]& \ZZ^{V(X)} \arrow[l,bend left,"\tau_X"'] \arrow[uu,bend left,"f^*"] \arrow[loop right,"L_X"]
\end{tikzcd}
    \caption{Pushforward and pullback maps associated to a harmonic morphism.}
    \label{fig:2}
\end{figure}

Let $f:\tX\to X$ be a harmonic morphism of graphs, and denote 
\[
f_*:\ZZ^{V(\tX)}\to \ZZ^{V(X)},\quad f_*(\tv)=f(\tv),\quad
f_*:\ZZ^{H(\tX)}\to \ZZ^{H(X)},\quad f_*(\th)=f(\th)
\]
the induced homomorphisms on the free abelian groups. For any graph morphism (not necessarily harmonic) we have
\[
f_*\circ r_{\tX}=r_X\circ f_*,\quad f_*\circ \iota_{\tX}=\iota_X\circ f_*.
\]
For any $\tv\in V(\tX)$ we have 
\begin{equation}
(f_*\circ \tau_{\tX})(\tv)=d_f(\tv) (\tau_X\circ f_*)(\tv)
\label{eq:taulocaldegree1}
\end{equation}
by the harmonicity of $f$, therefore
\[
(f_*\circ L_{\tX})(\tv)=d_f(\tv)(L_X\circ f_*)(\tv).
\]
It follows that $f_*(\Im L_{\tX})\subset \Im L_X$ and the map $f_*$ descends to a surjective \emph{pushforward map}
\[
f_*:\Jac(\tX)\to \Jac(X).
\]

Similarly, if $X$ is connected, we define the maps 
\begin{equation}
f^*:\ZZ^{V(X)}\to \ZZ^{V(\tX)},\quad 
f^*(v)=\sum_{\tv\in f^{-1}(v)}d_f(\tv)\cdot\tv
\label{eq:pullbacklocaldegree}
\end{equation}
and 
\[
f^*:\ZZ^{H(X)}\to \ZZ^{H(\tX)},\quad 
f^*(h)=\sum_{\th\in f^{-1}(h)} \th.
\]
It is easy to verify that  
\[
f^*(L_X(v))=\sum_{\tv\in f^{-1}(v)}L_{\tX}(\tv)
\]
for any $v\in V(X)$, hence $f^*(\Prin(X))\subset \Prin (\tX)$ and there is an induced \emph{pullback map}
\[
f^*:\Jac(X)\to \Jac(\tX).
\]
The map $f^*:\Jac(X)\to \Jac(\tX)$ is injective (Theorem 4.7 in~\cite{2009BakerNorine}), and the composition $f_*\circ f^*$ acts by multiplication by $\deg (f)$ on $\Jac(X)$. Figure~\ref{fig:2} displays all the maps associated to a harmonic morphism of graphs.

\subsection{Graphs of groups} \label{subsec:gog} We now recall graphs of groups, which are the natural category for taking quotients of graphs by non-free group actions. We modify the definitions in~\cite{1993Bass} to allow graphs with legs (and thus quotients by group actions that flip edges). 

\begin{definition} A \emph{graph of groups} $\XX=(X,\calX_v,\calX_h)$ consists of the following data:
\begin{itemize}
    \item A graph $X$ (possibly with legs).
    
    \item A group $\calX_v$ for each vertex $v\in V(X)$.

    \item A subgroup $\calX_h\subset \calX_{r_X(h)}$ for each half-edge $h\in H(X)$.

    \item An isomorphism $i_h:\calX_h\to \calX_{\iota_X(h)}$ for each edge $\{h,\iota_X(h)\}\in E(X)$, where we assume that $i_{\iota_X(h)}=i_{h}^{-1}$. 


\end{itemize}

\end{definition} 


Our definition differs slightly from the standard one~\cite{1993Bass}, where one assumes that the two groups $\calX_h$ and $\calX_{\iota_X(h)}$ corresponding to an edge are the same, and instead records monomorphisms $\calX_h\to \calX_{r(h)}$. The two approaches are equivalent in the case when there are no legs. We consider only finite graphs of groups, so that the underlying graph and all vertex groups are finite. 

We now define the quotient graph of groups by a right group action on a graph. The standard definition in~\cite{1993Bass} uses a trivialization with respect to a choice of spanning tree in the quotient graph and a lift of the tree to the source graph, and records the gluing data on the complementary edges (with respect to a choice of orientation). We find it more natural to instead trivialize the neighborhood of every vertex.

\begin{definition} Let $G$ be a group acting on the right on a graph $\tX$, let $X=\tX/G$ be the quotient graph, and let $p:\tX\to X$ be the quotient map. We define the \emph{quotient graph of groups} $\tX/\!/G=(X,\calX_v,\calX_h)$ on $X$ as follows:
\begin{enumerate}
    \item Choose a section $\widetilde{(\cdot)}:V(X)\to V(\tX)$ of the map $p:V(\tX)\to V(X)$. For each vertex $v\in V(X)$, $\calX_v=G_{\tv}=\{g\in G:\tv g=\tv\}$ is the stabilizer of the chosen preimage $\tv\in p^{-1}(v)$.

    \item Choose a section $\widetilde{(\cdot)}:H(X)\to H(\tX)$ of the map $p:H(\tX)\to H(X)$ with the property that $r_{\tX}(\th)=\widetilde{r_X(h)}$ for all $h\in H(X)$. For each half-edge $h\in H(X)$, $\calX_h=G_{\th}=\{g\in G:\th g=\th\}$ is the stabilizer the chosen preimage $\th\in p^{-1}(h)$. It is clear that $\calX_h=G_{\th}\subset G_{r_{\tX}(\th)}=\calX_{r_X(h)}$.

    
\end{enumerate}

\end{definition}

For $v\in V(X)$ and $g\in G$ we denote $\tv_g=\tv g$ (so that $\tv_1=\tv$); this identifies the fiber $p^{-1}(v)=\{\tv_g:g\in G\}$ with the set $\calX_v\backslash G$ of right cosets of $\calX_v$ in $G$. Similarly, given $h\in H(X)$ and $g\in G$ we denote $\th_g=\th g$ (so that $\th_1=\th$), so that $p^{-1}(h)=\{\th_g:g\in G\}$ is identified with $\calX_h\backslash G$. Hence
\begin{equation}
V(\tX)=\coprod_{v\in V(X)}\calX_v\backslash G,\quad H(\tX)=\coprod_{h\in H(X)}\calX_h\backslash G
\label{eq:tXset}
\end{equation}
as sets, and under this identification the root and projection maps and the $G$-action are given by 
\begin{equation}
p(\tv_g)=v,\quad p(\th_g)=h,\quad r_{\tX}(\th_g)=\widetilde{r_X(h)}_g,\quad \tv_gg'=\tv_{gg'},\quad \th_gg'=\th_{gg'}
\label{eq:tXmap1}
\end{equation}
for $v\in V(X)$, $h\in H(X)$, and $g,g'\in G$.

Finally, let $h\in H(X)$ be a half-edge. Applying the involution on $\tX$ to $\th$ gives a half-edge lying over $h'=\iota_X(h)$ (it may be that $h'=h$). Therefore there exists an element $\beta(h)\in G$, unique up to left multiplication by $\calX_{h'}$, such that $\iota_{\tX}(\th)=\widetilde{h'}_{\beta(h)}$. It follows that 
\begin{equation}
\iota_{\tX}(\th_g)=\widetilde{\iota_X(h)}_{\beta(h)g}
\label{eq:tXmap2}
\end{equation}
for all $h\in H(X)$ and $g\in G$. We observe that $\calX_{h'}=\beta(h)\calX_h\beta(h)^{-1}$. We can choose the $\beta(h)$ so that $\beta(h')=\beta(h)^{-1}$ for all $h$ (in general, they only satisfy $\beta(h')\beta(h)\in \calX_h$). The required isomorphism $i_h:\calX_h\to \calX_{\iota_X(h)}$ is then given by conjugation by $\beta(h)$.

We can run the construction in reverse and recover the morphism $p:\tX\to X$ together with the $G$-action on $\tX$ from the quotient graph of groups $X/\!/G$ together with the chosen elements $\beta(h)\in G$ (in keeping with graph-theoretic terminology, we may call the $\beta(h)$ a \emph{generalized $G$-voltage assignment} on $X/\!/G$). First of all, we assume that the vertex and half-edge groups are given not simply as abstract groups, but as subgroups of $G$. Hence we can define $\tX$ as a set by Equation~\eqref{eq:tXset}. The root and projection maps are given by Equation~\eqref{eq:tXmap1}, so that $\tX$ is trivialized in the neighborhood of each vertex. Finally, the involution map is given by Equation~\eqref{eq:tXmap2} and defines how the tangent spaces of the vertices are glued to each other. We note that for an edge $\{h,h'\}\in E(X)$ we may choose $\beta(h)\in G$ arbitrarily and then set $\beta(h')=\beta(h)^{-1}$, but for a leg $h\in L(X)$ the element $\beta(h)\in G$ must have order two (or be the identity), and furthermore must lie in the normalizer of $\calX_h$. The fiber $p^{-1}(h)$ over the leg $h$ consists of legs if $\beta(h)\in \calX_h$ (in which case we may as well have chosen $\beta(h)=1$) and edges if $\beta(h)\notin \calX_h$.

Two generalized $G$-voltage assignments on $X/\!/G$ are equivalent if they define isomorphic $G$-covers $\tX\to X$. The set of equivalence classes of voltage assignments may be constructed as the first \v{C}ech cohomology set of an appropriate constructible sheaf of non-abelian groups on $X$. This set was explicitly described in~\cite{2019LenUlirschZakharov} for an abelian group $G$ (in which case the set is also an abelian group), and the construction immediately generalizes to the non-abelian case. We leave the details to the interested reader.

\section{The Laplacian and the Jacobian group of a graph of groups}

Let $G$ be a finite group acting on a finite graph $\tX$, and let $p:\tX\to X=\tX/G$ be the quotient map. If the action of $G$ is free, then $p$ is a covering space and hence a harmonic morphism, and induces pushforward and pullback homomorphisms $p_*:\Jac(\tX)\to \Jac(X)$ and $p^*:\Jac(X)\to \Jac(\tX)$. However, for an arbitrary $G$-action there is no natural relationship between $\Jac(\tX)$ and $\Jac(X)$. The solution is to replace $X$ with the quotient graph of groups $\XX=\tX/\!/G$, and to define the chip-firing operation on $\tX/\!/G$ in a way that takes into account the orders of the local stabilizers. We now describe this construction.

\subsection{Chip-firing on a graph of groups} Let $\XX=(X,\calX_v,\calX_h)$ be a graph of groups, and let $\ZZ^{V(X)}$ and $\ZZ^{H(X)}$ be the free abelian groups on the vertices and half-edges of the underlying graph, respectively. As for graphs, we call $\ZZ^{V(X)}$ the \emph{divisor group} of $\XX$, and interpret divisors as distributions of chips on the vertices of the underlying graph $X$ (the chips are not weighted in any way). As before, the root and involution maps induce homomorphisms
\[
r_X:\ZZ^{H(X)}\to \ZZ^{V(X)},\quad\iota_X:\ZZ^{H(X)}\to \ZZ^{H(X)}.
\]
For $v\in V(X)$ denote $c(v)=|\calX_v|$ the order of the local group at $v$, and similarly for $h\in H(X)$ denote $c(h)=|\calX_h|$. Given an edge $e=\{h,h'\}\in E(X)$, we denote $c(e)=c(h)=c(h')$. For each half-edge $h\in H(X)$ rooted at $v=r_X(h)$, there is an inclusion $\calX_h\subset \calX_v$ of the local groups, hence $c(h)$ divides $c(v)$. We now define the weighted transpose of $r_X$ by the formula
\begin{equation}
\tau_{\XX}:\ZZ^{V(X)}\to \ZZ^{H(X)},\quad \tau_{\XX}(v)=
\sum_{h\in T_vX}\frac{c(v)}{c(h)}h.
\label{eq:tauXX}
\end{equation}

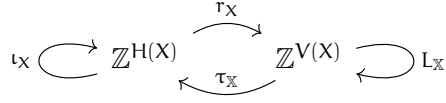
\begin{figure}
    \centering
\begin{tikzcd}
\ZZ^{H(X)}\arrow[r,bend left,"r_X"] \arrow[loop left,"\iota_X"]& \ZZ^{V(X)} \arrow[l,bend left,"\tau_{\XX}"'] \arrow[loop right,"L_{\XX}"]
\end{tikzcd}
    \caption{Factorization of the Laplacian of a graph of groups.}
    \label{fig:3}
\end{figure}

\begin{definition} The \emph{Laplacian} of the graph of groups $\XX=(X,\calX_v,\calX_h)$ is the homomorphism $L_{\XX}:\ZZ^{V(X)}\to \ZZ^{V(X)}$ given by
\begin{equation}
L_{\XX}=r_{X}\circ(Id-\iota_{X})\circ \tau_{\XX},\quad L_{\XX}(v)=\sum_{h\in T_vX}\frac{c(v)}{c(h)}(v-r_{X}(\iota_{X}(h))).
\label{eq:LXX}
\end{equation}

\end{definition}

Given a vertex $v\in V(G)$, the divisor $-L_{\XX}(v)$ is the result of \emph{firing} the vertex $v$. It is obtained by moving, along each edge $e=\{h,h'\}$ rooted at $v$, a stack of $c(v)/c(e)$ chips from $v$ to the other root vertex of $e$. As in the case of graphs, if $h$ is a leg or belongs to a loop then $r_X(h)=r_X(\iota_X(h))$, so loops and legs do not contribute to the Laplacian. However, the chip-firing operation is not symmetric: firing two adjacent vertices will in general cause them to exchange chips. As before, if $X$ is connected then $\ZZ^{V(X)}_0=\Im(r_{X}\circ(Id-\iota_{X}))$, so the group of \emph{principal divisors} $\Im L_{\XX}$ lies in $\ZZ^{V(X)}_0$. Hence we can define the Jacobian group of $\XX$ in the same way as for graphs:

\begin{definition} The \emph{Jacobian} group of a graph of groups $\XX$ is the quotient group
\[
\Jac(\XX)=\ZZ^{V(X)}_0/\Im L_{\XX}.
\]
\end{definition}

We give an explicit formula for the matrix $L$ of the Laplacian $L_{\XX}$ of a graph of groups $\XX=(X,\calX_v,\calX_h)$. Assume that $X$ has no legs (this does not affect the Laplacian), and let $n=|V(X)|$ and $m=|E(X)|$ be the number of vertices and edges, respectively. Then $L=Q-A$, where $Q$ is the diagonal \emph{valency matrix} and $A$ is the \emph{adjacency matrix} of the graph of groups $\XX$:
\begin{equation}
L_{uv}=Q_{uv}-A_{uv},\quad
Q_{uv}=\delta_{uv}\sum_{h\in T_vX}\frac{c(v)}{c(h)},\quad A_{uv}=\sum_{h\in T_vX:\, r_X(\iota_X(h))=u}\frac{c(v)}{c(h)}.
\label{eq:QA}
\end{equation}
We note that $L$ and $A$ are not symmetric in general. The Laplacian $L$ is degenerate, specifically its rows sum to zero (but generally not the columns). 

We introduce the following matrix factorizations. Let $C_V$ and $C_E$ be the respectively $n\times n$ and $m\times m$ diagonal matrices 
\[
(C_V)_{uv}=c(u)\delta_{uv},\quad (C_E)_{ef}=c(e)\delta_{ef}
\]
recording the orders of the local groups. Let $S$ and $T$ be the root matrices~\eqref{eq:adjacency} of $X$, with respect to a choice of orientation. It is then elementary to verify that
\begin{equation}
Q=SC_E^{-1}S^tC_V+TC_E^{-1}T^tC_V,\quad A=SC_E^{-1}T^tC_V+TC_E^{-1}S^tC_V,\quad L=(S-T)C_E^{-1}(S-T)^tC_V.
\label{eq:factorization}
\end{equation}

For future use, we also require the adjugate of the Laplacian.

\begin{lemma} The adjugate of the Laplacian matrix $L$ of a graph of groups $\XX=(X,\calX_v,\calX_h)$ is equal to
\[
\adj(L)=C_V^{-1}J\xi.
\]
Here $J$ is the matrix whose entries are all equal to $1$, and the constant $\xi$ is equal to
\[
\xi=\prod_{v\in V(X)}c(v)\sum_{T\subset X}\prod_{e\in E(T)}c(e)^{-1},
\]
where the sum is taken over all spanning trees $T$ of $X$.
\label{lem:adj}
\end{lemma}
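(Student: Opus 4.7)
The plan is to exploit the rank-one structure of $\adj(L)$ forced by the kernels of $L$ and $L^t$, and then to evaluate the resulting scalar via Cauchy--Binet and the classical matrix-tree argument.

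First I would identify the left and right kernels of $L$ using the factorization $L=(S-T)C_E^{-1}(S-T)^t C_V$ from \eqref{eq:factorization}. Since $C_V$ is invertible and $(S-T)C_E^{-1}(S-T)^t$ is a positive semidefinite matrix whose kernel is $\mathrm{span}(\mathbf{1})$ (connectedness of $X$ together with $(S-T)^t\mathbf{1}=0$), the right kernel of $L$ equals $\mathrm{span}(C_V^{-1}\mathbf{1})$, where $C_V^{-1}\mathbf{1}$ denotes the column vector with entries $1/c(v)$. The analogous argument for $L^t$ yields $\ker L^t=\mathrm{span}(\mathbf{1})$, equivalently $\mathbf{1}^tL=0$.

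Since $X$ is connected, $L$ has corank one and hence $\adj(L)$ has rank one. The identities $L\cdot\adj(L)=\adj(L)\cdot L=0$ then force every column of $\adj(L)$ to lie in $\ker L$ and every row to lie in $\ker L^t$. Tracking both constraints through simultaneously, there is a single scalar $f$ such that $\adj(L)_{uv}=f/c(u)$ for all $u,v$; in matrix form, $\adj(L)=f\cdot C_V^{-1}J$. I would then pin down $f$ by computing any diagonal entry: $\adj(L)_{u_0 u_0}=\det L^{(u_0,u_0)}$ for any chosen vertex $u_0$, so $f=c(u_0)\det L^{(u_0,u_0)}$.

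Finally I would evaluate $\det L^{(u_0,u_0)}$ from the factorization. Writing $L=MC_V$ with $M=(S-T)C_E^{-1}(S-T)^t$ and using that $C_V$ is diagonal gives $\det L^{(u_0,u_0)}=\det M^{(u_0,u_0)}\cdot\prod_{v\neq u_0}c(v)$. Applying Cauchy--Binet to $M^{(u_0,u_0)}=(S-T)^{(u_0)}C_E^{-1}\bigl((S-T)^{(u_0)}\bigr)^t$, together with the classical fact that the determinant of the reduced signed incidence matrix restricted to $n-1$ edges is $\pm 1$ exactly when those edges form a spanning tree and $0$ otherwise, produces $\det M^{(u_0,u_0)}=\sum_T\prod_{e\in E(T)}c(e)^{-1}$. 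Multiplying through yields $f=c(u_0)\cdot\prod_{v\neq u_0}c(v)\cdot\sum_T\prod_e c(e)^{-1}=\xi$, as required. The main subtlety is the rank-one argument in the third step: one needs both kernels of $L$ to be exactly one-dimensional so that the separate row and column constraints on $\adj(L)$ merge into a single scalar degree of freedom; the Cauchy--Binet step is then a routine weighted generalization of Kirchhoff's theorem.
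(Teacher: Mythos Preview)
Your proposal is correct and follows essentially the same approach as the paper. The paper's proof is a one-line pointer to the classical argument (Cauchy--Binet applied to the incidence factorization, as in Biggs) and instructs the reader to repeat it with the weighted factorization~\eqref{eq:factorization}; your write-up is precisely that execution, including the needed adaptation of the rank-one/kernel argument to the asymmetric Laplacian (identifying $\ker L=\mathrm{span}(C_V^{-1}\mathbf{1})$ and $\ker L^t=\mathrm{span}(\mathbf{1})$ before pinning down the scalar via Cauchy--Binet).
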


\begin{proof} The adjugate of the Laplacian $L$ of an ordinary graph $X$ is equal to $J\cdot \kappa(X)$, where $\kappa(X)=|\Jac(X)|$ is the number of spanning trees, and is computed by applying the Cauchy--Binet formula to the factorization $L=(S-T)(S-T)^t$ (see, for example, Theorem 6.3 in~\cite{1993Biggs}). Applying the same proof to the Laplacian of a graph of groups and using the factorization in Equation~\eqref{eq:factorization} gives the desired result. 

\end{proof}



\begin{remark} We note that defining chip-firing on a graph of groups $\XX=(X,X_v,X_h)$ uses only the underlying graph and the orders $c(v)=|\calX_v|$ and $c(h)=|\calX_h|$ of the local groups. The structure of the groups is irrelevant, which is not surprising given that chip-firing is an abelian theory. In particular, given a group action of $G$ on $X$, the choices of the local stabilizers that are made when defining the quotient graph of groups $X/\!/G$ do not affect chip-firing.

Furthermore, this definition of chip-firing makes sense for any graph whose vertices and edges are equipped with weights $c(v)$ and $c(e)$, with the condition that the weight of any edge divides the weights of its root vertices. The weights themselves need not be integers, so for example rescaling all weights by an arbitrary factor does not change the chip-firing map. This framework allows one to modify the edges and edge weights of a graph without changing the chip-firing map. For example, one may eliminate edge weights entirely by dividing all weights by a sufficiently large number such that each edge $e$ has weight $1/n(e)$ for some integer $n(e)$, and then replacing each edge $e$ with $n(e)$ unweighted edges. Conversely, a set $\{e_1,\ldots,e_n\}$ of edges joining two vertices can be replaced by a single edge $e$ with weight $c(e)=(c(e_1)^{-1}+\cdots+c(e_n)^{-1})^{-1}$, so chip-firing on any weighted graph is equivalent to chip-firing on a simple graph (without multi-edges). Vertex weights, however, cannot be modified away.

\end{remark}

\subsection{The order of the Jacobian via spanning trees} We now compute the order of the Jacobian $\Jac(\XX)=(X,\calX_v,\calX_h)$ of a graph of groups $\XX$ in two different ways. The first formula generalizes Kirchhoff's theorem and computes $\Jac(\XX)$ as a weighted sum over the spanning trees of $X$. A similar formula for a graph with trivial vertex weights appears in Theorem 4.1 in~\cite{2015Chiodo}.

\begin{theorem} \label{thm:matrixtree} Let $\XX=(X,\calX_v,\calX_h)$ be a graph of groups. For each vertex $v\in V(X)$ and edge $e=\{h,h'\}\in E(X)$, let $c(v)=|\calX_v|$ and $c(e)=|\calX_h|=|\calX_{h'}|$ be the orders of the local groups. The order of the Jacobian of $\XX$ is equal to 
\[
\big|\Jac(\XX)\big|=c_v^{-1}\prod_{v\in V(X)}c(v)\sum_{T\subset X}\prod_{e\in E(T)}c(e)^{-1},
\]
where $c_v$ is the least common multiple of the vertex weights $c(v)$, and the sum is taken over all spanning trees $T$ of $X$.
\end{theorem}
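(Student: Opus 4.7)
The plan is to identify $|\Jac(\XX)|$ with the $(n-1)$-st determinantal divisor of the Laplacian matrix $L$, and then to evaluate that GCD directly from Lemma~\ref{lem:adj}.

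First I would verify that each column of $L$ sums to zero: a half-edge $h\in T_vX$ contributes $+c(v)/c(h)$ to $L_{vv}$ and $-c(v)/c(h)$ to $L_{r_X(\iota_X(h)),v}$, and these two contributions cancel within column $v$. Hence the rank of $L$ over $\QQ$ is at most $n-1$; on the other hand, Lemma~\ref{lem:adj} forces $\adj(L)\neq 0$, since $X$ is connected and so admits at least one spanning tree (making $\xi>0$), which gives rank exactly $n-1$. The degree map descends to a surjection $\Coker L\twoheadlongrightarrow \ZZ$ whose kernel is $\Jac(\XX)$; since $\Coker L$ has free rank $n-(n-1)=1$ and this free summand is carried isomorphically onto the quotient $\ZZ$, we conclude that $\Jac(\XX)$ is precisely the torsion subgroup of $\Coker L$.

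By Smith normal form, $|\Jac(\XX)|$ equals the product of the nonzero invariant factors of $L$, which by the standard theory of determinantal divisors is the GCD $\Delta_{n-1}(L)$ of all $(n-1)\times(n-1)$ minors of $L$; equivalently, it is the GCD of the entries of $\adj(L)$. Lemma~\ref{lem:adj} gives $(\adj L)_{uv}=\xi/c(u)$, so
\[
|\Jac(\XX)| \;=\; \gcd_{u\in V(X)}\frac{\xi}{c(u)} \;=\; \frac{\xi}{\lcm_u c(u)} \;=\; \frac{\xi}{c_v},
\]
where the middle equality is a prime-by-prime computation, valid because each $\xi/c(u)$ is an integer (since $\adj L$ has integer entries). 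Substituting the explicit formula for $\xi$ yields the claim.

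The only genuinely non-routine step is the identification $|\Jac(\XX)|=\Delta_{n-1}(L)$ via Smith normal form. It is slightly delicate because $L$ is not symmetric, so the familiar symmetric-Laplacian narrative is unavailable and the rank must be extracted from the nonvanishing of a single cofactor supplied by Lemma~\ref{lem:adj}. Once this is in hand, the rest of the proof is a short substitution.
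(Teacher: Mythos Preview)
Your proof is correct and takes a genuinely different route from the paper's. The paper deletes the last row of $L$ to get a map $L':\ZZ^n\to\ZZ^{n-1}$ with $\Jac(\XX)=\ZZ^{n-1}/\Im L'$, then further deletes the last column to obtain a square matrix $\widetilde{L}$ and writes $|\Jac(\XX)|=\det\widetilde{L}\,\big/\,|\Im L'/\Im\widetilde{L}|$. The numerator is a single diagonal entry $\xi/c(v_n)$ of $\adj L$, while the denominator $c_v/c(v_n)$ is extracted by hand from the weighted column relation $\sum_i u_i/c(v_i)=0$. Your argument instead identifies $|\Jac(\XX)|$ with the torsion order of $\Coker L$, hence with the $(n-1)$-st determinantal divisor of $L$, i.e.\ the gcd of \emph{all} entries of $\adj L$; Lemma~\ref{lem:adj} gives these entries uniformly as $\xi/c(u)$, and the gcd drops out as $\xi/c_v$ in one line. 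Your approach is shorter and exploits the lemma more fully; the paper's approach is more hands-on and singles out one cofactor, at the cost of the auxiliary index computation.
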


\begin{proof} Denote $n=|V(X)|$ and $m=|V(E)|$ and label the vertices of $X$ as $V(X)=\{v_1,\ldots,v_n\}$. Fix an orientation on $X$, then the matrix $L$ of the Laplacian of $\XX$ admits the factorization~\eqref{eq:factorization}
\[
L=BC_E^{-1}B^TC_V,\quad B=S-T,
\]
where $C_E$ and $C_V$ are diagonal matrices recording the $c(e)$ and the $c(v)$. Let $L=[u_1\cdots u_n]$ denote the columns of $L$, these vectors satisfy the relation
\begin{equation}
\frac{u_1}{c(v_1)}+\cdots+\frac{u_n}{c(v_n)}=0.
\label{eq:sumofcolumns}
\end{equation}
The matrix $L$ defines the chip-firing map $L:\ZZ^n\to \ZZ^n$, whose image lies in the kernel of the degree map $\deg:\ZZ^n\to \ZZ$ (which sums the components). Fix the vertex $v_n$ and let $\ZZ^n\to \ZZ^{n-1}$ be the homomorphism that forgets the last coordinate; it is clear that it maps $\Ker \deg$ isomorphically onto $\ZZ^{n-1}$. The matrix of the composed map $\ZZ^n\to \ZZ^n\to \ZZ^{n-1}$ is $L'=[u'_1\cdots u'_n]$, which is $L$ with the last row removed. Then the Jacobian is
\[
\Jac(\XX)=\Ker \deg/\Im L=\ZZ^{n-1}/\Im L'.
\]
Let $\widetilde{L}=[u'_1\cdots u'_{n-1}]$ be the matrix obtained by removing the last column from $L'$, then
\begin{equation}
\big|\Jac(\XX)\big|=\frac{\big|\ZZ^{n-1}/\Im \widetilde{L}\big|}{\big|\Im L'/\Im \widetilde{L}\big|}.
\label{eq:doublefrak}
\end{equation}
The group $\Im L'/\Im \widetilde{L}$ is the finite cyclic group generated by the vector $u'_n$ over the lattice $\langle u_1'\cdots u_{n-1}'\rangle$. Clearing denominators in~\eqref{eq:sumofcolumns}, we obtain the minimal relation between the $u'_i$:
\[
\frac{c_v}{c(v_1)}u'_1+\cdots+\frac{c_v}{c(v_n)}u'_n=0.
\]
Hence the order of $u'_n$ and thus the denominator in~\eqref{eq:doublefrak} is equal to
\begin{equation}
\big|\Im L'/\Im \widetilde{L}\big|=\left|\frac{\langle u_1'\cdots u_n'\rangle}{\langle u_1'\cdots u_{n-1}'\rangle}\right|=\frac{c_v}{c(v_n)}.
\label{eq:extrafactor}
\end{equation}
The numerator in~\eqref{eq:doublefrak} is the determinant of the $(n-1)\times (n-1)$ matrix $\widetilde{L}$ obtained from $L$ by deleting the last row and column. By Lemma~\ref{lem:adj}, it is equal to 
\[
\big|\ZZ^{n-1}/\Im\widetilde{L}\big|=\det \widetilde{L}=\frac{1}{c(v_n)}\xi=\prod_{i=1}^{n-1}c(v_i)\sum_{T\subset X}\prod_{e\in E(T)}c(e)^{-1}.
\]
Plugging the above two equations into~\eqref{eq:doublefrak}, we obtain the result.



\end{proof}

\subsection{The order of the Jacobian via the zeta function} We give an alternative method for computing the order of the Jacobian group $\Jac(\XX)$ of a graph of groups. Recall that the Ihara zeta function $\zeta(u,X)$ of a graph $X$ is an analogue of the Dedekind zeta function of a number field. It is defined as an Euler product over the primes of $X$, which are equivalence classes of certain closed walks on $X$. Unlike its arithmetic analogue, the Ihara zeta function $\zeta(u,X)$ is the reciprocal of an explicit polynomial associated to $X$. Specifically, let $n=|V(X)|$ and $m=|E(X)|$, and let $Q$ and $A$ be the $n\times n$ valency and adjacenty matrices of $X$, then Bass's three-term determinant formula (see~\cite{1992Bass} and~\cite{2010Terras}) states that
\[
\zeta(u,X)^{-1}=(1-u^2)^{m-n}\det(I_n-Au+(Q-I_n)u^2).
\]

The Ihara zeta function of a graph exhibits a number of remarkable similarities to the Dedekind zeta function. For example, it satisfies a graph-theoretic analogue of the class number formula, with $\Jac(X)$ playing the role of the ideal class group. Specifically, at $u=1$ the zeta function has a pole of order $g=m-n+1$ (if $g\geq 2$) and its reciprocal has the following Taylor expansion (see~\cite{1998Northshield}):
\begin{equation}
\label{eq:northshield}
\zeta(u,\XX)^{-1}= 2^g(-1)^{g+1}(g-1)\big |\Jac(\XX)\big|\cdot (u-1)^g+O\left((u-1)^{g+1}\right).
\end{equation}

It is a natural problem to generalize closed walks and the Ihara zeta function to graphs of groups. In~\cite{2021Zakharov}, the second author defined $\zeta(u,\XX)$ for a graph of groups $\XX$ having trivial edge groups and proved an analogue of Bass's three-term determinant formula for $\zeta(u,\XX)$ (see Theorem 3.8 in~\cite{2021Zakharov}), and in upcoming work will extend these results to arbitrary graphs of groups.

It is natural to expect that the Ihara zeta function $\zeta(u,\XX)$ of a graph of groups $\XX$ computes the order of $\Jac(\XX)$. We show that this is indeed the case, provided that $\zeta(u,\XX)$ satisfies an analogue of Bass's three-term determinant formula (which it does in the edge-trivial case by Theorem 3.8 of~\cite{2021Zakharov}).

\begin{theorem} \label{thm:zeta} Let $\XX=(X,\calX_v,\calX_h)$ be a finite graph of groups on a graph with $n=|V(X)|$ vertices and $m=|E(X)|$ edges. Define the Ihara zeta function of $\XX$ by the formula
\[
\zeta(u,\XX)^{-1}=(1-u^2)^{m-n}\det(I_n-Au+(Q-I_n)u^2),
\]
where $Q$ and $A$ are the valency and adjacency matrices~\eqref{eq:QA} of $\XX$. Then $\zeta(u,\XX)^{-1}$ has a zero of order $g=m-n+1$ at $u=1$, and has leading coefficient 
\[
\zeta(u,\XX)^{-1}= 2^g(-1)^{g+1}c_v\left(\sum_{e\in E(X)}c(e)^{-1}-\sum_{v\in V(X)}c(v)^{-1}\right)\big |\Jac(\XX)\big|\cdot (u-1)^g+O\left((u-1)^{g+1}\right),
\]
where $c_v$ is the least common multiple of the vertex weights $c(v)$.

\end{theorem}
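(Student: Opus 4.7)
The plan is to adapt the proof of the classical Northshield formula~\eqref{eq:northshield} to the weighted setting, combining Jacobi's derivative formula with Lemma~\ref{lem:adj} and Theorem~\ref{thm:matrixtree}. Set $F(u) = \det(I_n - Au + (Q - I_n)u^2)$, so that $\zeta(u, \XX)^{-1} = (1 - u^2)^{m-n} F(u)$. Near $u = 1$ the first factor Taylor expands as $(-1)^{m-n} 2^{m-n} (u - 1)^{m-n} + O((u-1)^{m-n+1})$, so it suffices to show that $F$ has a simple zero at $u = 1$ with derivative
\[
F'(1) = 2 c_v \left(\sum_{e \in E(X)} c(e)^{-1} - \sum_{v \in V(X)} c(v)^{-1}\right) \big|\Jac(\XX)\big|.
\]
Absorbing the extra factor of $2$ and using $(-1)^{m-n} = (-1)^{g+1}$ then yields the claimed leading coefficient $2^g(-1)^{g+1} c_v(\cdots)|\Jac(\XX)|$.

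The first step is to check that $F(1) = \det(Q - A) = \det L = 0$, which follows because the columns of $L$ sum to zero. Lemma~\ref{lem:adj} shows that $\adj(L)$ is a nonzero scalar multiple of $C_V^{-1} J$, so $L$ has rank exactly $n-1$ and the zero of $F$ at $u = 1$ is simple as soon as $F'(1) \neq 0$. To compute $F'(1)$, apply Jacobi's formula $(\det M)'(u) = \tr(\adj(M(u)) M'(u))$ with $M(u) = I_n - Au + (Q - I_n) u^2$. Direct differentiation gives $M'(1) = -A + 2(Q - I_n) = L + (Q - 2 I_n)$, and $\adj(L) \cdot L = \det(L) I_n = 0$ kills the $L$ summand, leaving $F'(1) = \tr(\adj(L)(Q - 2 I_n))$. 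Combining Lemma~\ref{lem:adj} with Theorem~\ref{thm:matrixtree} rewrites $\adj(L) = c_v |\Jac(\XX)| \cdot C_V^{-1} J$, reducing the problem to the explicit evaluation of $\tr(C_V^{-1} J (Q - 2 I_n))$.

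This last trace is a direct bookkeeping calculation: since $(C_V^{-1} J)_{uv} = c(u)^{-1}$, for any matrix $M$ one has $\tr(C_V^{-1} J M) = \sum_{v \in V(X)} c(v)^{-1} \cdot (\text{column sum of } M \text{ at } v)$. The column sum of $Q - 2 I_n$ at $v$ equals $Q_{vv} - 2 = \sum_{h \in T_v X} c(v)/c(h) - 2$, and multiplying by $c(v)^{-1}$ and summing over $v$ gives $\sum_{h \in H(X)} c(h)^{-1} - 2 \sum_v c(v)^{-1}$. Since legs do not contribute to $L$, we may assume $L(X) = \emptyset$; then $\sum_{h \in H(X)} c(h)^{-1} = 2 \sum_{e \in E(X)} c(e)^{-1}$ and the formula for $F'(1)$ follows. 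The only substantive step is this trace computation, so there is no serious obstacle; one subtlety inherited from Northshield's original setting is that the displayed coefficient may itself vanish (for instance, with trivial weights when $g = 1$), in which case the Taylor expansion remains valid but the order of the zero of $\zeta(u,\XX)^{-1}$ at $u=1$ is strictly greater than $g$.
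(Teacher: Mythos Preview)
Your proof is correct and follows essentially the same route as the paper: Jacobi's derivative formula applied to $\det(I_n-Au+(Q-I_n)u^2)$, the simplification via $\adj(L)\cdot L=0$, and then Lemma~\ref{lem:adj} together with Theorem~\ref{thm:matrixtree} to identify the constant $\xi=c_v|\Jac(\XX)|$. The only cosmetic difference is in the final trace: the paper evaluates $\tr(C_V^{-1}JQ)$ by invoking the factorization $Q=SC_E^{-1}S^tC_V+TC_E^{-1}T^tC_V$ from~\eqref{eq:factorization}, whereas you use the column-sum identity $\tr(C_V^{-1}JM)=\sum_v c(v)^{-1}\sum_u M_{uv}$ directly against the diagonal matrix $Q-2I_n$; both computations are one line. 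Your closing caveat about the possible vanishing of the leading coefficient (e.g.\ trivial weights with $g=1$) is a fair point that the paper does not make explicit.
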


\begin{proof} Plugging $u=1$ into the determinant we get
\[
\det(I_n-A+(Q-I_n))=\det L=0,
\]
since the Laplacian is singular. The term $(1-u^2)^{m-n}$ has a zero of order $g-1$ at $u=1$ with leading coefficient $2^{g-1}(-1)^{g+1}$. Therefore $\zeta(u,\XX)$ has a zero of order at least $g$ at $u=1$, and it is sufficient to show that
\[
\left.\frac{d}{du}\det (I_n-Au+(Q-I_n)u^2)\right|_{u=1}=2c_v\big |\Jac(\XX)\big|\left(\sum_{e\in E(X)}c(e)^{-1}-\sum_{v\in V(X)}c(v)^{-1}\right).
\]
We follow the proof of Theorem 2.11 in~\cite{2019HammerMattmanSandsVallieres}. Using Jacobi's formula, we have
\[
\left.\frac{d}{du}\det (I_n-Au+(Q-I_n)u^2)\right|_{u=1}=\left.\tr\left[\adj(I_n-Au+(Q-I_n)u^2)\frac{d}{du}(I_n-Au+(Q-I_n)u^2)\right]\right|_{u=1}=
\]
\begin{equation}
=\tr\left[\adj (Q-A)\cdot(2Q-A-2I_n)\right]=
\tr\adj (L)\cdot Q-2\tr \adj(L),
\label{eq:Jacobi}
\end{equation}
where we used that $L=Q-A$ and therefore
\[
\adj(L)\cdot (Q-A)=\adj(L)\cdot L=\det L\cdot I_n=0.
\]
By Lemma~\ref{lem:adj} and Equation~\eqref{eq:factorization} we have
\[
\tr \adj(L)=\xi\tr (C_V^{-1}J)=\xi \tr(C_V^{-1})=\xi\sum_{v\in V(X)}c(v)^{-1},
\]
\[
\tr \adj(L)\cdot Q=\xi\tr[C_V^{-1}J(SC_E^{-1}S^tC_V+TC_E^{-1}T^tC_V)]=\xi\tr[J(SC_E^{-1}S^t+TC_E^{-1}T^t)]=2\xi\sum_{e\in E(X)}c(e)^{-1},
\]
where
\[
\xi=\prod_{v\in V(X)}c(v)\sum_{T\subset X}\prod_{e\in E(T)}c(e)^{-1}=c_v\big |\Jac(\XX)\big|
\]
by Theorem~\ref{thm:matrixtree}. Plugging these into Equation~\eqref{eq:Jacobi}, we obtain the desired result. 

\end{proof}

\section{The Jacobian of a quotient graph of groups} We now determine the relationship between the Jacobians $\Jac(\tX)$ and $\Jac(\XX)$, where $\tX$ is a graph with a right $G$-action and $\XX=X/\!/G=(X,\calX_v,\calX_h)$ is the quotient graph of groups. 

\subsection{Pushforward and pullback to the quotient}

Let $X=\tX/G$ be the quotient graph, let $p:\tX\to X$ be the quotient map, and let $c(v)=|\calX_v|$ and $c(h)=|\calX_h|$ be the vertex and edge weights. We recall the description of $\tX$ in terms of $\XX$ and a voltage assignment $\beta:H(X)\to G$ given in Section~\ref{subsec:gog}. Following Equation~\eqref{eq:tXset}, we make the identifications
\begin{equation}
\ZZ^{V(\tX)}=\bigoplus_{v\in V(X)}\ZZ^{\calX_v\backslash G},\quad \ZZ^{H(\tX)}=\bigoplus_{h\in H(X)}\ZZ^{\calX_h\backslash G},
\label{eq:tXgroups}
\end{equation}
where the summands correspond to the fibers of $p$. The generators of $\ZZ^{\calX_v\backslash G}$ are denoted $\tv_g$ for $v\in V(X)$ and $g\in G$, where $\tv_g=\tv_{g'}$ if and only if $\calX_vg=\calX_vg'$, and similarly for half-edges. 

It is elementary to verify that, in terms of these identifications, the maps $r_{\tX}$, $\iota_{\tX}$, and $\tau_{\tX}$ are given by the following formulas on the generators:
\begin{equation}
r_{\tX}(\th_g)=\widetilde{r_X(h)}_g,\quad \iota_{\tX}(\th_g)=\widetilde{\iota_X(h)}_{\beta(h)g}, \quad\tau_{\tX}(\tv_g)=\sum_{h\in T_vX}\sum_{g'\in \calX_h\backslash \calX_v}\th_{g'g}.
\label{eq:tXmapsongen}
\end{equation}
We note that the $G$-action on $\tX$ naturally defines right $\ZZ G$-module structures on $\ZZ^{V(\tX)}$ and $\ZZ^{H(\tX)}$, but we do not use this. The various homomorphisms between the free abelian groups associated to the quotient $p:\tX\to X$ are shown on Figure~\ref{fig:5} (the objects in the top row are described in Section~\ref{subsec:kernel}).
\begin{figure}
    \centering
\begin{tikzcd}
\displaystyle\bigoplus_{h\in H(X)} \ZZ^{\calX_h \backslash G}_0\arrow[r,bend left,"r_0"] \arrow[dd,"i_*"'] \arrow[loop left,"\iota_0"]&
\displaystyle\bigoplus_{v\in V(X)} \ZZ^{\calX_v \backslash G}_0 \arrow[l,bend left,"\tau_0"'] \arrow[dd,"i_*"']\arrow[loop right,"L_0"] \\
 & \\
\displaystyle\bigoplus_{h\in H(X)} \ZZ^{\calX_h \backslash G}\arrow[r,bend left,"r_{\tX}"] \arrow[dd,bend left,"p_*"'] \arrow[loop left,"\iota_{\tX}"]&
\displaystyle\bigoplus_{v\in V(X)} \ZZ^{\calX_v\backslash G} \arrow[l,bend left,"\tau_{\tX}"'] \arrow[dd,bend left,"p_*"']\arrow[loop right,"L_{\tX}"] \\
 & \\
\ZZ^{H(X)}\arrow[r,bend left,"r_X"] \arrow[uu,bend left,"p^*"] \arrow[loop left,"\iota_X"]& \ZZ^{V(X)} \arrow[l,bend left,"\tau_{\XX}"'] \arrow[uu,bend left,"p^*"] \arrow[loop right,"L_X"]
\end{tikzcd}
    \caption{Pushforward and pullback maps associated to a quotient}
    \label{fig:5}
\end{figure}
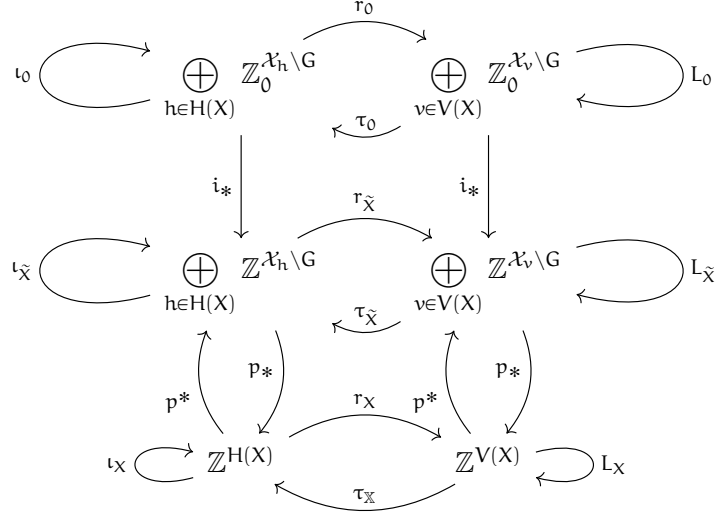

We define the \emph{pushforward} homomorphisms
\[
p_*:\ZZ^{V(\tX)}\to \ZZ^{V(X)},\quad p_*:\ZZ^{H(\tX)}\to \ZZ^{H(X)}
\]
on the generators by the formulas
\[
p_*(\tv_g)=v,\quad v\in V(X),\quad p_*(\th_g)=h,\quad h\in H(X).
\]
We note that the formulas are the same as for a harmonic morphism, in other words, $p_*$ simply adds up the chips in each fiber without any additional weights.

\begin{proposition} The pushforward homomorphism $p_*:\ZZ^{V(\tX)}\to \ZZ^{V(X)}$ commutes with the Laplacians 
\[
p_*\circ L_X=L_{\XX}\circ p_*
\]
and defines a surjective homomorphism $p_*:\Jac(\tX)\to \Jac(\XX)$.
\end{proposition}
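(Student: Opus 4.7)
The plan is to decompose both Laplacians via their three-term factorizations $L_{\tX}=r_{\tX}\circ(\Id-\iota_{\tX})\circ\tau_{\tX}$ and $L_{\XX}=r_X\circ(\Id-\iota_X)\circ\tau_{\XX}$, and to verify that $p_*$ intertwines each of the three constituents separately. Two of the three identities are automatic from the fact that $p$ is a morphism of graphs: the root and involution maps commute with any graph morphism, so
\[
p_*\circ r_{\tX}=r_X\circ p_*,\qquad p_*\circ\iota_{\tX}=\iota_X\circ p_*.
\]

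The nontrivial identity is $p_*\circ\tau_{\tX}=\tau_{\XX}\circ p_*$, and this is where the weights built into the graph-of-groups Laplacian are recovered from stabilizer orders. Using the formula for $\tau_{\tX}$ from~\eqref{eq:tXmapsongen}, I compute on a generator $\tv_g$ with $p_*(\tv_g)=v$:
\[
p_*(\tau_{\tX}(\tv_g))=\sum_{h\in T_vX}\sum_{g'\in \calX_h\backslash\calX_v}p_*(\th_{g'g})=\sum_{h\in T_vX}|\calX_h\backslash\calX_v|\cdot h=\sum_{h\in T_vX}\frac{c(v)}{c(h)}\,h,
\]
which is exactly $\tau_{\XX}(v)=\tau_{\XX}(p_*(\tv_g))$ by the definition~\eqref{eq:tauXX}. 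The only substantive input is the orbit–stabilizer identity $|\calX_h\backslash \calX_v|=c(v)/c(h)$. Composing the three intertwining relations yields $p_*\circ L_{\tX}=L_{\XX}\circ p_*$.

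For the statement about Jacobians, I note that $p_*$ sends each generator $\tv_g$ to a single vertex $v$, hence preserves total degree and restricts to a map $p_*:\ZZ^{V(\tX)}_0\to \ZZ^{V(X)}_0$. The commutation with the Laplacians implies $p_*(\Im L_{\tX})\subset \Im L_{\XX}$, so $p_*$ descends to a homomorphism $p_*:\Jac(\tX)\to \Jac(\XX)$. Surjectivity is immediate: for any $v_1,v_2\in V(X)$, picking preimages $\tv_1,\tv_2\in V(\tX)$ gives $p_*(\tv_1-\tv_2)=v_1-v_2$, and such differences generate $\ZZ^{V(X)}_0$.

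I do not anticipate any real obstacle; the proof is essentially bookkeeping. The Laplacian $L_{\XX}$ was \emph{defined} via the weighted transpose $\tau_{\XX}$ precisely so that summing along fibers of $\tau_{\tX}$ reproduces the coefficient $c(v)/c(h)$. The one place that deserves care is confirming, in the identification~\eqref{eq:tXgroups}, that the tangent space at a lift $\tv_g$ contains exactly $c(v)/c(h)$ preimages of each $h\in T_vX$, but this is a direct consequence of the description of $\tX$ via the voltage assignment given in Section~\ref{subsec:gog}.
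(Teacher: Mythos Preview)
Your proof is correct and follows essentially the same route as the paper: both verify the three intertwining identities for $r$, $\iota$, and $\tau$ separately, with the key computation being $p_*\circ\tau_{\tX}=\tau_{\XX}\circ p_*$ via the index count $|\calX_h\backslash\calX_v|=c(v)/c(h)$, and then deduce surjectivity on Jacobians from surjectivity of $p_*$ on vertex groups.
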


\begin{proof} The identities
\[
p_*\circ r_{\tX}=r_X\circ p_*,\quad p_*\circ \iota_{\tX}=\iota_X\circ p_*
\]
hold because $p$ is a morphism of the underlying graphs (though not harmonic in general). It remains to see how $p_*$ interacts with $\tau_{\tX}$ and $\tau_{\XX}$. Let $\tv_g\in V(\tX)$ be a vertex lying over $p(\tv_g)=v$. By Equation~\eqref{eq:tXmapsongen}, we have
\[
(p_*\circ \tau_{\tX})(\tv_g)=p_*\left[\sum_{h\in T_vX}\sum_{g'\in \calX_h\backslash \calX_v}\th_{g'g}\right]=\sum_{h\in T_vX}\sum_{g'\in \calX_h\backslash \calX_v}h=\sum_{h\in T_vX}\frac{|\calX_v|}{|\calX_h|}h,
\]
which is exactly
\[
(\tau_{\XX}\circ p_*)(\tv_g)=\tau_{\XX}(v)=\sum_{h\in T_vX}\frac{c(v)}{c(h)}h.
\]
We therefore see that
\begin{equation}
p_*\circ \tau_{\tX}=\tau_{\XX}\circ p_*,\quad p_*\circ L_X=L_{\XX}\circ p_*,
\label{eq:taulocaldegree2}
\end{equation}
and hence $p_*$ induces a homomorphism $p_*:\Jac(\tX)\to \Jac(\XX)$, which is surjective because the original map $p_*:\ZZ^{V(\tX)}\to \ZZ^{V(X)}$ is surjective.


\end{proof}

We also define a \emph{pullback} homomorphism as follows. Define homomorphisms
\[
p^*:\ZZ^{V(X)}\to \ZZ^{V(\tX)},\quad p^*:\ZZ^{H(X)}\to \ZZ^{H(\tX)}
\]
on the generators as follows:
\begin{equation}
p^*(v)=c(v)\sum_{g\in \calX_v\backslash G}\tv_g, \quad p^*(h)=c(h)\sum_{g\in \calX_h\backslash G}\th_g.
\label{eq:pullbackgog}
\end{equation}

\begin{proposition} The pullback homomorphism $p^*:\ZZ^{V(X)}\to \ZZ^{V(\tX)}$ commutes with the Laplacians
\[
L_{\tX}\circ p^*=p^*\circ L_{\XX}
\]
and defines a homomorphism $p^*:\Jac(\XX)\to \Jac(\tX)$. Furthermore, the homomorphism $p_*\circ p^*$ acts by multiplication by $|G|$ on $\Jac(\XX)$.
\end{proposition}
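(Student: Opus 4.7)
The plan is to reduce the Laplacian identity $L_{\tX}\circ p^* = p^* \circ L_{\XX}$ to three more basic commutations between $p^*$ and the structural maps appearing in the factorizations~\eqref{eq:LX} and~\eqref{eq:LXX}, namely
\[
p^* \circ r_X = r_{\tX}\circ p^*, \quad p^*\circ \iota_X = \iota_{\tX}\circ p^*, \quad p^*\circ \tau_{\XX} = \tau_{\tX}\circ p^*,
\]
each of which I check on the generators using the description of $\tX$ in Equation~\eqref{eq:tXmapsongen}. After that, descending to the Jacobians and the statement about $p_* \circ p^*$ will be essentially automatic.

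The first two commutations are coset bookkeeping. For $r$, I apply $r_{\tX}$ to $p^*(h) = c(h)\sum_{g\in \calX_h\backslash G}\th_g$ using $r_{\tX}(\th_g) = \tv_g$ with $v = r_X(h)$; since each coset of $\calX_v$ meets the summation range in exactly $[\calX_v:\calX_h] = c(v)/c(h)$ cosets of $\calX_h$, the sum collapses to $\frac{c(v)}{c(h)}\sum_{g\in \calX_v\backslash G}\tv_g$, and the $c(h)$ prefactor combines with this to give exactly $p^*(v)$. For $\iota$, the identity $\iota_{\tX}(\th_g) = \widetilde{\iota_X(h)}_{\beta(h)g}$ together with $\calX_{\iota_X(h)} = \beta(h)\calX_h\beta(h)^{-1}$ ensures that left multiplication by $\beta(h)$ maps the cosets $\calX_h\backslash G$ bijectively to $\calX_{\iota_X(h)}\backslash G$, so reindexing by $g' = \beta(h)g$ turns $\iota_{\tX}(p^*(h))$ into $p^*(\iota_X(h))$.

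The third commutation is the main technical step. Expanding $\tau_{\tX}(p^*(v))$ with~\eqref{eq:tXmapsongen} yields the triple sum
\[
c(v)\sum_{h\in T_vX}\sum_{g\in \calX_v\backslash G}\sum_{g'\in\calX_h\backslash \calX_v}\th_{g'g}.
\]
The key observation is that the product map $(g',g)\mapsto g'g$ gives a bijection $(\calX_h \backslash \calX_v) \times (\calX_v \backslash G) \to \calX_h \backslash G$: given any $x \in G$, the coset $\calX_v g$ is determined uniquely by $\calX_v x$, and once $g$ is fixed, the element $xg^{-1}\in \calX_v$ determines a unique $g' \in \calX_h \backslash \calX_v$. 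Using this to reindex the inner double sum as $\sum_{g''\in \calX_h\backslash G}\th_{g''}$ and comparing with $p^*(\tau_{\XX}(v))$, where the weights $c(v)/c(h)$ from~\eqref{eq:tauXX} cancel the $c(h)$ in the definition of $p^*(h)$, gives the desired equality. Composing the three identities yields $L_{\tX}\circ p^* = p^* \circ L_{\XX}$.

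To pass to the Jacobians, observe that $p^*(v)$ has degree $c(v)\cdot [G:\calX_v] = |G|$, so $p^*$ multiplies degrees by $|G|$ and in particular sends $\ZZ^{V(X)}_0$ into $\ZZ^{V(\tX)}_0$; the Laplacian commutation then gives $p^*(\Im L_{\XX})\subset \Im L_{\tX}$, so $p^*$ descends to a homomorphism $\Jac(\XX)\to \Jac(\tX)$. Finally, for $p_*\circ p^* = |G|\cdot\Id$, the computation is direct on divisors:
\[
p_*(p^*(v)) = c(v)\cdot |\calX_v\backslash G|\cdot v = |G|\cdot v,
\]
and this passes to $\Jac(\XX)$. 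The main obstacle I anticipate is the coset combinatorics in the $\tau$ identity; the rest is manipulation of stabilizer orders.
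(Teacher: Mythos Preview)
Your proof is correct and follows essentially the same route as the paper: both reduce the Laplacian commutation to the three identities $p^*\circ r_X = r_{\tX}\circ p^*$, $p^*\circ \iota_X = \iota_{\tX}\circ p^*$, and $p^*\circ \tau_{\XX} = \tau_{\tX}\circ p^*$, verify these on generators via the coset description, and then invoke orbit--stabilizer for $p_*\circ p^*=|G|\cdot\Id$. Your write-up is slightly more explicit in two places (the $\beta(h)$-reindexing for $\iota$ and the observation that $p^*$ multiplies degree by $|G|$, needed to land in $\ZZ^{V(\tX)}_0$), but the argument is the same; one minor caveat is that your ``bijection'' $(\calX_h\backslash\calX_v)\times(\calX_v\backslash G)\to\calX_h\backslash G$ is not literally a well-defined map of coset sets---what you are really using, as the paper phrases it, is that each right $\calX_v$-coset is partitioned into $\calX_h$-cosets.
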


\begin{proof} Let $h\in H(X)$ be a half-edge rooted at $v=r_X(h)\in V(X)$. Then
\[
(r_{\tX}\circ p^*)(h)=r_{\tX}\left[|\calX_h|\sum_{g\in \calX_h\backslash G}\th_g\right]=|\calX_h|\sum_{g\in \calX_h\backslash G}\tv_g=|\calX_h|\sum_{g\in \calX_v\backslash G}\frac{|\calX_v|}{|\calX_h|}\tv_g=p^*(v)=(p^*\circ r_X)(h),
\]
hence $r_{\tX}\circ p^*=p^*\circ r_X$. Similarly, $\iota_{\tX}\circ p^*=p^*\circ \iota_X$ because $c(\iota_X(h))=c(h)$ for all $h\in H(X)$. Finally, let $v\in V(X)$, then by Equation~\eqref{eq:tauXX} we have
\[
(p^*\circ \tau_{\XX})(v)=p^*\left[\sum_{h\in T_vX}\frac{|\calX_v|}{|\calX_h|}h\right]=\sum_{h\in T_vX}\frac{|\calX_v|}{|\calX_h|}|\calX_h|\sum_{g\in \calX_h\backslash G}\th_g=|\calX_v|\sum_{h\in T_vX}\sum_{g\in \calX_h\backslash G}\th_g,
\]
while by Equation~\eqref{eq:tXmapsongen}
\[
(\tau_{\tX}\circ p^*)(v)=\tau_{\tX}\left[|\calX_v|\sum_{g\in \calX_v\backslash G}\tv_g,\right]=|\calX_v|\sum_{g\in \calX_v\backslash G}\sum_{h\in T_vX}\sum_{g'\in \calX_h\backslash \calX_v}\th_{g'g},
\]
and the two sums agree since each right $\calX_v$-coset is naturally partitioned into $\calX_h$-cosets. Therefore $\tau_{\tX}\circ p^*=p^*\circ \tau_{\XX}$, and putting everything together we get $L_{\tX}\circ p^*=p^*\circ L_{\XX}$. Hence the pullback map induces a homomorphism $p^*:\Jac(\XX)\to \Jac(\tX)$, and $(p_*\circ p^*)(v)=|G|v$ for any $v\in V(X)$ by the orbit-stabilizer theorem.



\end{proof}

We note that, unlike the case of graphs, the pullback homomorphism $p^*$ need not be injective. For example, let $G$ act trivially on any graph $X$, then $\Jac(X/\!/G)=\Jac(X)$ and $p^*:\Jac(X/\!/G)\to \Jac(X)$ acts by multiplication by $|G|$, which is the trivial map if $|G|$ is divisible by $|\Jac(X)|$.

\begin{remark} It is instructive to compare the pushforward $p_*$ and pullback $p^*$ homomorphisms associated to a $G$-cover $p:\tX\to X$ to those associated to a harmonic morphism $f:\tX\to X$. Comparing Equation~\eqref{eq:taulocaldegree1} with~\eqref{eq:taulocaldegree2}, and similarly~\eqref{eq:pullbacklocaldegree} with~\eqref{eq:pullbackgog}, we offer the following stack-theoretic interpretation of the morphisms $p_*$ and $p^*$. The map $p$ views a vertex $\tv\in V(\tX)$ lying over $v=p(\tv)$ as a set of $c(v)$ indistinguishable vertices that have been identified by the $G$-action. The morphism $p$ may then be viewed as a \emph{harmonic morphism} having local degree one at each of these identified vertices. This explains why no degree coefficient appears in Equation~\eqref{eq:taulocaldegree2}, in contrast to Equation~\eqref{eq:taulocaldegree1}. Similarly, the coefficient $c(v)$ in Equation~\eqref{eq:pullbackgog} should be viewed as a count of these identified vertices, and not as a local degree coefficient as in Equation~\eqref{eq:pullbacklocaldegree}. With this interpretation, $p$ is a covering space map (in the stacky sense) of global degree $|G|$. 

\end{remark}

\begin{remark} More generally, one can define the notion of a \emph{harmonic morphism of graphs of groups} $f:\XX\to \YY$ inducing pushforward and pullback homomorphisms $f_*:\Jac(\XX)\to \Jac(\YY)$ and $f^*:\Jac(\YY)\to \Jac(\XX)$. Such a map $f$ is required to satisfy a balancing condition at vertices that takes the local weighs on both $\XX$ and $\YY$ into account. A natural example is the subquotient map $X/\!/H\to X/\!/G$ corresponding a subgroup $H\subset G$ of a group $G$ acting on a graph $X$. We leave the details to the interested reader. \end{remark}

\subsection{Quotients of the tetrahedron} \label{subsec:tetrahedron}

As a simple example, we consider all interesting quotients of $K_4$, the complete graph on $4$ vertices. Denote $V(K_4)=\{a,b,c,d\}$. It is well-known that
\[
\Jac(K_4)\simeq \ZZ/4\ZZ\oplus \ZZ/4\ZZ.
\]
Specifically, $\Jac(K_4)$ is generated by the classes of the divisors
\[
D_a=a-d,\quad D_b=b-d,\quad D_c=c-d
\]
subject to the relations
\[
4D_a=4D_b=4D_c=D_a+D_b+D_c=0.
\]

The automorphism group of $K_4$ is $S_4$, and we consider the quotients $K_4/\!/G$ for all subgroups $G\subset S_4$ that act non-transitively on the vertices (otherwise the quotient graph has a single vertex and its divisor theory is trivial). There are, up to conjugation, four such subgroups, which we enumerate below. The corresponding quotient graphs of groups are shown in Figure~\ref{fig:K4}. Vertices are marked by bold dots, so a line segment with one end vertex represents a leg. Nontrivial stabilizers are labeled by their degree.

\begin{figure}[h]
\centering
\begin{tikzcd}
\begin{tikzpicture}
\draw [ultra thick] (0,0) .. controls (0.5,0.3) and (1,0.3) .. (1.5,0);
\draw [ultra thick] (0,0) .. controls (0.5,-0.3) and (1,-0.3) .. (1.5,0);
\draw[fill](0,0) circle(1mm);
\draw[fill](1.5,0) circle(1mm);
\draw [ultra thick] (0,0) -- (-0.7,0);
\draw [ultra thick] (1.5,0) -- (2.2,0);
\end{tikzpicture}
&&
\begin{tikzpicture}
\draw[fill](0,0) circle(1mm);
\draw[fill](2.0,0.5) circle(1mm);
\draw[fill](1.5,-0.3) circle(1mm);
\draw[fill](0.9,1.5) circle(1mm);
\draw [ultra thick] (0,0) -- (2.0,0.5);
\draw [ultra thick] (0,0) -- (1.5,-0.3);
\draw [ultra thick] (0,0) -- (0.9,1.5);
\draw [ultra thick] (2.0,0.5) -- (1.5,-0.3);
\draw [ultra thick] (2.0,0.5) -- (0.9,1.5);
\draw [ultra thick] (1.5,-0.3) -- (0.9,1.5);
\end{tikzpicture}
\arrow{ll}[swap]{C_{2,2}} \arrow{lld}[swap]{C_3}  \arrow{rrd}{C_2} \arrow{rr}{V_4}
&& 
\begin{tikzpicture}
\draw [ultra thick] (0,0) -- (1.5,0);
\draw[fill](0,0) circle(1mm) node[above]{$2$};
\draw[fill](1.5,0) circle(1mm) node[above]{$2$};
\draw(1.85,0) node[above]{$2$};
\draw(-0.35,0) node[above]{$2$};
\draw [ultra thick] (0,0) -- (-0.7,0);
\draw [ultra thick] (1.5,0) -- (2.2,0);
\end{tikzpicture}
\\
\begin{tikzpicture}
\draw [ultra thick] (0,0) -- (-1.5,0);
\draw [ultra thick] (0,0) .. controls (0.5,0.3) and (0.6,0.1) .. (0.6,0);
\draw [ultra thick] (0,0) .. controls (0.5,-0.3) and (0.6,-0.1) .. (0.6,0);
\draw[fill](0,0) circle(1mm);
\draw[fill](-1.5,0) circle(1mm) node[left]{$3$};
\end{tikzpicture} 
&& 
\,
&& 
\begin{tikzpicture}
\draw [ultra thick] (0,0) -- (-0.7,0);
\draw [ultra thick] (0,0) -- (1.3,0.75) -- (1.3,-0.75) -- (0,0);
\draw[fill](0,0) circle(1mm);
\draw[fill](1.3,0.75) circle(1mm) node[right]{$2$};
\draw[fill](1.3,-0.75) circle(1mm) node[right]{$2$};
\draw(1.3,0) node[right]{$2$};
\end{tikzpicture}
\end{tikzcd}
\caption{Quotients of $K_4$ by non-vertex-transitive group actions.}
\label{fig:K4}
\end{figure}
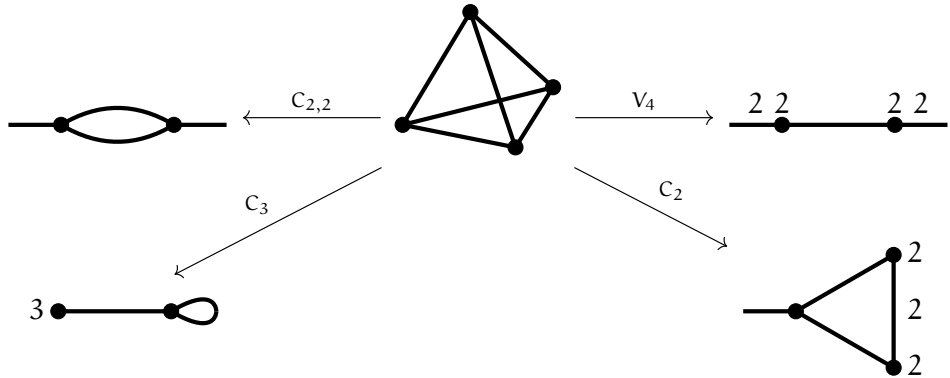

\begin{enumerate}
    \item $C_2$, the order 2 subgroup generated by $(ab)$. The valency, adjacency, and Laplacian matrices of $K_4/\!/C_2$ are
\[
Q=\left(\begin{array}{ccc} 3 & 0 & 0 \\ 0 & 3 & 0 \\ 0 & 0 & 3
\end{array}\right),\quad
A=\left(\begin{array}{ccc} 1 & 2 & 2 \\ 1 & 0 & 1 \\ 1 & 1 & 0
\end{array}\right),\quad
L=\left(\begin{array}{ccc} 2 & -2 & -2 \\ -1 & 3 & -1 \\ -1 & -1 & 3
\end{array}\right).
\]
Finding the Smith normal form of $L$, we see that $\Jac(K_4/\!/C_2)\simeq \ZZ/4\ZZ$. In fact, the Jacobian is generated by the class of $D=p_*(D_a)=p_*(D_b)$, and the pullback map is given by $p^*(D)=D_a+D_b$.

    \item $C_{2,2}$, the order 2 subgroup generated by $(ab)(cd)$. The valency, adjacency, and Laplacian matrices of $K_4/\!/C_{2,2}$ are
\[
Q=\left(\begin{array}{cc} 3 & 0 \\ 0 & 3 
\end{array}\right),\quad
A=\left(\begin{array}{ccc} 1 & 2 \\ 2 & 1 
\end{array}\right),\quad
L=\left(\begin{array}{ccc} 2 & -2 \\ -2 & 2 
\end{array}\right).
\]
The Jacobian is $\Jac(K_4/\!/C_{2,2})\simeq \ZZ/2\ZZ$, generated by $D=p_*(D_a)=p_*(D_b)$, while $p_*(D_c)=0$. The pullback map is $p^*(D)=D_a+D_b-D_c$.

    \item $V_4$, the non-normal Klein 4-group generated by $(ab)$ and $(cd)$. The valency, adjacency, and Laplacian matrices of $K_4/\!/V_4$ are in fact identical to those of $K_4/\!/C_{2,2}$, and the Jacobian is also $\Jac(K_4/\!/V_4)\simeq \ZZ/2\ZZ$.

    \item $C_3$, the order 3 subgroup generated by $(abc)$. The valency, adjacency, and Laplacian matrices of $K_4/\!/C_3$ are
\[
Q=\left(\begin{array}{cc} 3 & 0 \\ 0 & 3 
\end{array}\right),\quad
A=\left(\begin{array}{ccc} 2 & 3 \\ 1 & 0 
\end{array}\right),\quad
L=\left(\begin{array}{ccc} 1 & -3 \\ -1 & 3 
\end{array}\right).
\]
Finding the Smith normal form of $L$, we see that $\Jac(K_4/\!/C_3)$ is the trivial group.
\end{enumerate}

\subsection{Quotients of the Petersen graph} \label{subsec:petersen} As an extended example, we consider the various quotients of the Petersen graph $P$. We identify the vertices of $P$ with two-element subsets of a five-element set $\{a,b,c,d,e\}$. Two vertices are connected by an edge when the corresponding two-element subsets are disjoint. The Jacobian of the Petersen graph is $\Jac(P)\simeq \ZZ/2\ZZ\oplus (\ZZ/10\ZZ)^3$. We have computed the Jacobian $\Jac(P/\!/H)$ of the quotient graph of groups for all subgroups $H\subset \Aut(P)=S_5$, by finding the Smith normal form of the Laplacian. Figure~\ref{fig:subgroups} lists all subgroups $H$, up to conjugacy, having the property that $\Jac(P/\!/H)$ is nontrivial.

The corresponding quotient graphs of groups are shown on Figure~\ref{fig:Petersen}. Each vertex in a quotient graph is labeled by the first vertex of its preimage, with respect to lexicographic order. Numbers at vertices, edges, and legs indicate the orders of nontrivial stabilizers.

\begin{figure}[h]
    \centering

    \begin{tabular}{c|c|c|c}
    Generators of $H$ & Isomorphism class of $H$ & Order of $H$ & $\Jac(P/\!/H)$ \\
    \hline
    $(ab)$ & $\ZZ/2\ZZ$ & $2$ & $(\ZZ/10\ZZ)^2$ \\
    \hline
    $(abc)$ & $\ZZ/3\ZZ$ & $3$ & $\ZZ/5\ZZ$ \\
    \hline
    $(ab)(cd)$ & $\ZZ/2\ZZ$ & $2$ & $\ZZ/2\ZZ\oplus\ZZ/10\ZZ$ \\
    \hline
    $(ab)(cd),(ac)(bd)$ & $(\ZZ/2\ZZ)^2$ & $4$ & $(\ZZ/2\ZZ)^2$ \\
    \hline
    $(abcd)$ & $\ZZ/4\ZZ$ & $4$ & $\ZZ/2\ZZ$ \\
    \hline
    $(ab),(cd)$ & $(\ZZ/2\ZZ)^2$ & $4$ & $\ZZ/10\ZZ$ \\
    \hline
    $(ab),(abc)$ & $S_3$ & $6$ & $\ZZ/5\ZZ$ \\
    \hline
    $(abcd),(ac)$ & $D_4$ & $8$ & $\ZZ/2\ZZ$ 
    \end{tabular}
    \caption{Subgroups $H$ of $\Aut(P)=S_5$ and Jacobians of corresponding quotient graphs of groups. Subgroups resulting in trivial $\Jac(P/\!/H)$ are not listed.}
    \label{fig:subgroups}
\end{figure}

\begin{figure}[h]
    \centering

\begin{tikzcd}
\begin{tikzpicture}
\draw[thick] (0,2) -- (1.5,0) -- (0,-2) -- (-1.5,0) -- (0,2);
\draw[thick] (0,2) -- (0,-2);
\draw[thick] (1.5,0) -- (2.2,0);
\draw[thick] (-1.5,0) -- (-2.2,0);

\draw[fill,white] (1.5,0) circle(3mm);
\draw[thick] (1.5,0) circle(3mm);
\draw (1.5,-0.1) node{$ad$};

\draw[fill,white] (-1.5,0) circle(3mm);
\draw[thick] (-1.5,0) circle(3mm);
\draw (-1.5,-0.1) node{$ac$};

\draw[fill,white] (0,2) circle(3mm);
\draw[thick] (0,2) circle(3mm);
\draw (0,1.9) node{$ce$};

\draw[fill,white] (0,0.67) circle(3mm);
\draw[thick] (0,0.67) circle(3mm);
\draw (0,0.57) node{$ab$};
\draw (0.45,0.52) node{$2$};

\draw[fill,white] (0,-0.67) circle(3mm);
\draw[thick] (0,-0.67) circle(3mm);
\draw (0,-0.77) node{$cd$};
\draw (0.45,-0.82) node{$2$};

\draw[fill,white] (0,-2) circle(3mm);
\draw[thick] (0,-2) circle(3mm);
\draw (0,-2.1) node{$ae$};

\draw (0.15,-0.15) node{$2$};
\end{tikzpicture}
&&
\begin{tikzpicture}
\draw[thick] (0,0) circle(2cm);
\draw[thick] (0,2) -- (0,0);
\draw[thick] (-1.74,-1) -- (0,0);
\draw[thick] (1.74,-1) -- (0,0);

\draw[fill,white] (0,2) circle(3mm);
\draw[thick] (0,2) circle(3mm);
\draw (0,1.9) node{$ad$};

\draw[fill,white] (-1.74,-1) circle(3mm);
\draw[thick] (-1.74,-1) circle(3mm);
\draw (-1.74,-1.1) node{$ac$};

\draw[fill,white] (1.74,-1) circle(3mm);
\draw[thick] (1.74,-1) circle(3mm);
\draw (1.74,-1.1) node{$ae$};

\draw[fill,white] (0,1) circle(3mm);
\draw[thick] (0,1) circle(3mm);
\draw (0,0.9) node{$ce$};
\draw (0.45,0.85) node{$2$};

\draw[fill,white] (-0.87,-0.5) circle(3mm);
\draw[thick] (-0.87,-0.5) circle(3mm);
\draw (-0.87,-0.6) node{$de$};
\draw (-0.87,-1.15) node{$2$};

\draw[fill,white] (0.87,-0.5) circle(3mm);
\draw[thick] (0.87,-0.5) circle(3mm);
\draw (0.87,-0.6) node{$cd$};
\draw (0.87,-1.15) node{$2$};

\draw[fill,white] (0,0) circle(3mm);
\draw[thick] (0,0) circle(3mm);
\draw (0,-0.1) node{$ab$};
\draw (0.55,-0.2) node{$2$};
\draw (0,-0.65) node{$2$};
\draw (-0.55,-0.2) node{$2$};
\draw (0.15,0.35) node{$2$};

\end{tikzpicture}
&&
\begin{tikzpicture}
\draw[thick] (0,2) -- (0,-2) -- (-1.5,0) -- (0,2);
\draw[thick] (-1.5,0) -- (-2.2,0);

\draw[fill,white] (-1.5,0) circle(3mm);
\draw[thick] (-1.5,0) circle(3mm);
\draw (-1.5,-0.1) node{$ac$};

\draw[fill,white] (0,2) circle(3mm);
\draw[thick] (0,2) circle(3mm);
\draw (0,1.9) node{$ce$};
\draw (0.45,1.85) node{$2$};

\draw[fill,white] (0,0.67) circle(3mm);
\draw[thick] (0,0.67) circle(3mm);
\draw (0,0.57) node{$ab$};
\draw (0.45,0.52) node{$4$};

\draw[fill,white] (0,-0.67) circle(3mm);
\draw[thick] (0,-0.67) circle(3mm);
\draw (0,-0.77) node{$cd$};
\draw (0.45,-0.82) node{$4$};

\draw[fill,white] (0,-2) circle(3mm);
\draw[thick] (0,-2) circle(3mm);
\draw (0,-2.1) node{$ae$};
\draw (0.45,-2.15) node{$2$};

\draw (0.15,-0.15) node{$4$};
\draw (0.15,1.18) node{$2$};
\draw (0.15,-1.48) node{$2$};

\end{tikzpicture}
\\
&&
&&
\\
\begin{tikzpicture}
\draw[thick] (0.5,0.87) -- (-1,0) -- (0.5,-0.87);
\draw[thick] (0.5,0.87) .. controls (0.7,0.5) and (0.7,-0.5) .. (0.5,-0.87);
\draw[thick] (0.5,0.87) .. controls (0.3,0.5) and (0.3,-0.5) .. (0.5,-0.87);
\draw[thick] (-2.5,0) -- (-1,0);

\draw[fill,white] (0.5,0.87) circle(3mm);
\draw[thick] (0.5,0.87) circle(3mm);
\draw (0.5,0.77) node{$ad$};

\draw[fill,white] (0.5,-0.87) circle(3mm);
\draw[thick] (0.5,-0.87) circle(3mm);
\draw (0.5,-0.97) node{$ae$};

\draw[fill,white] (-1,0) circle(3mm);
\draw[thick] (-1,0) circle(3mm);
\draw (-1,-0.1) node{$ac$};

\draw[fill,white] (-2.5,0) circle(3mm);
\draw[thick] (-2.5,0) circle(3mm);
\draw (-2.5,-0.1) node{$de$};
\draw (-2.5,-0.65) node{$3$};

\end{tikzpicture}
&&
\begin{tikzpicture}
\draw[thick] (0,0) circle(2cm);
\draw[thick] (0,1)  -- (-0.59, -0.81) -- (0.95,0.31) -- (-0.95,0.31) -- (0.59,-0.81) --(0,1);
\draw[thick] (0,2) -- (0,1);
\draw[thick] (-1.9,0.62) -- (-0.95,0.31);
\draw[thick] (1.9,0.62) -- (0.95,0.31);
\draw[thick] (-1.18,-1.62) -- (-0.59,-0.81);
\draw[thick] (1.18,-1.62) -- (0.59,-0.81);

\draw[fill,white] (0,2) circle(3mm);
\draw[thick] (0,2) circle(3mm);
\draw (0,1.9) node{$ce$};

\draw[fill,white] (-1.9,0.62) circle(3mm);
\draw[thick] (-1.9,0.62) circle(3mm);
\draw (-1.9,0.52) node{$bd$};

\draw[fill,white] (-1.18,-1.62) circle(3mm);
\draw[thick] (-1.18,-1.62) circle(3mm);
\draw (-1.18,-1.72) node{$ac$};

\draw[fill,white] (1.18,-1.62) circle(3mm);
\draw[thick] (1.18,-1.62) circle(3mm);
\draw (1.18,-1.72) node{$be$};

\draw[fill,white] (1.9,0.62) circle(3mm);
\draw[thick] (1.9,0.62) circle(3mm);
\draw (1.9,0.52) node{$ad$};

\draw[fill,white] (0,1) circle(3mm);
\draw[thick] (0,1) circle(3mm);
\draw (0,0.9) node{$ab$};

\draw[fill,white] (-0.95,0.31) circle(3mm);
\draw[thick] (-0.95,0.31) circle(3mm);
\draw (-0.95,0.21) node{$ae$};

\draw[fill,white] (-0.59,-0.81) circle(3mm);
\draw[thick] (-0.59,-0.81) circle(3mm);
\draw (-0.59,-0.91) node{$de$};

\draw[fill,white] (0.59,-0.81) circle(3mm);
\draw[thick] (0.59,-0.81) circle(3mm);
\draw (0.59,-0.91) node{$cd$};

\draw[fill,white] (0.95,0.31) circle(3mm);
\draw[thick] (0.95,0.31) circle(3mm);
\draw (0.95,0.21) node{$bc$};

\end{tikzpicture}
\arrow{uu}[swap]{(ab)}
\arrow{lluu}[swap]{(ab)(cd)}
\arrow{rruu}{(ab),(cd)}
\arrow{ll}[swap]{(abc)}
\arrow{rr}{(abcd)}
\arrow{lldd}{(ab),(abc)}
\arrow{dd}{(ab)(cd),(ac)(bd)}
\arrow{rrdd}{(abcd),(ac)}
&&
\begin{tikzpicture}
\draw[thick] (0,-2.2) -- (0,0);
\draw[thick] (0,1.5) -- (0,2.2);

\draw[thick] (0,1.5) .. controls (0.2,1.2) and (0.2,0.3) .. (0,0);
\draw[thick] (0,1.5) .. controls (-0.2,1.2) and (-0.2,0.3) .. (0,0);

\draw[fill,white] (0,-1.5) circle(3mm);
\draw[thick] (0,-1.5) circle(3mm);
\draw (0,-1.6) node{$ac$};
\draw (0.45,-1.65) node{$2$};
\draw (0.15,-2.15) node{$2$};

\draw[fill,white] (0,0) circle(3mm);
\draw[thick] (0,0) circle(3mm);
\draw (0,-0.1) node{$ae$};

\draw[fill,white] (0,1.5) circle(3mm);
\draw[thick] (0,1.5) circle(3mm);
\draw (0,1.4) node{$ab$};
\end{tikzpicture}
\\
&&
&&
\\
\begin{tikzpicture}
\draw[thick] (0.5,0.87) -- (-1,0) -- (0.5,-0.87) -- (0.5,0.87);
\draw[thick] (-2.5,0) -- (-1,0);

\draw[fill,white] (0.5,0.87) circle(3mm);
\draw[thick] (0.5,0.87) circle(3mm);
\draw (0.5,0.77) node{$ad$};
\draw (0.95,0.72) node{$2$};

\draw[fill,white] (0.5,-0.87) circle(3mm);
\draw[thick] (0.5,-0.87) circle(3mm);
\draw (0.5,-0.97) node{$ae$};
\draw (0.95,-1.02) node{$2$};

\draw[fill,white] (-1,0) circle(3mm);
\draw[thick] (-1,0) circle(3mm);
\draw (-1,-0.1) node{$ac$};
\draw (-1,-0.65) node{$2$};

\draw[fill,white] (-2.5,0) circle(3mm);
\draw[thick] (-2.5,0) circle(3mm);
\draw (-2.5,-0.1) node{$de$};
\draw (-2.5,-0.65) node{$6$};

\draw (-1.75,-0.35) node{$2$};
\draw (-0.25,0.55) node{$2$};
\draw (-0.25,-0.8) node{$2$};

\end{tikzpicture}
&&
\begin{tikzpicture}
\draw[thick] (0,2.25) -- (0,0);
\draw[thick] (-1.95,-1.125) -- (0,0);
\draw[thick] (1.95,-1.125) -- (0,0);

\draw[fill,white] (0,1.5) circle(3mm);
\draw[thick] (0,1.5) circle(3mm);
\draw (0,1.4) node{$ac$};
\draw (0.45,1.35) node{$2$};
\draw (0.15,1.9) node{$2$};

\draw[fill,white] (-1.3,-0.75) circle(3mm);
\draw[thick] (-1.3,-0.75) circle(3mm);
\draw (-1.3,-0.85) node{$ab$};
\draw (-1.3,-1.4) node{$2$};
\draw (-1.7,-1.35) node{$2$};

\draw[fill,white] (1.3,-0.75) circle(3mm);
\draw[thick] (1.3,-0.75) circle(3mm);
\draw(1.3,-0.85) node{$ad$};
\draw(1.3,-1.4) node{$2$};
\draw(1.7,-1.35) node{$2$};

\draw[fill,white] (0,0) circle(3mm);
\draw[thick] (0,0) circle(3mm);
\draw (0,-0.1) node{$ae$};
\end{tikzpicture}
&&
\begin{tikzpicture}
\draw[thick] (0,-2.2) -- (0,2.2);

\draw[fill,white] (0,-1.5) circle(3mm);
\draw[thick] (0,-1.5) circle(3mm);
\draw (0,-1.6) node{$ac$};
\draw (0.45,-1.65) node{$4$};
\draw (0.15,-2.15) node{$4$};
\draw (0.15,-0.85) node{$2$};

\draw[fill,white] (0,0) circle(3mm);
\draw[thick] (0,0) circle(3mm);
\draw (0,-0.1) node{$ae$};
\draw (0.45,-0.15) node{$2$};

\draw[fill,white] (0,1.5) circle(3mm);
\draw[thick] (0,1.5) circle(3mm);
\draw (0,1.4) node{$ab$};
\draw (0.45,1.35) node{$2$};
\draw (0.15,1.9) node{$2$};
\end{tikzpicture}
\end{tikzcd}

    \caption{Quotients of the Petersen graph having non-trivial Jacobian.}
    \label{fig:Petersen}
\end{figure}
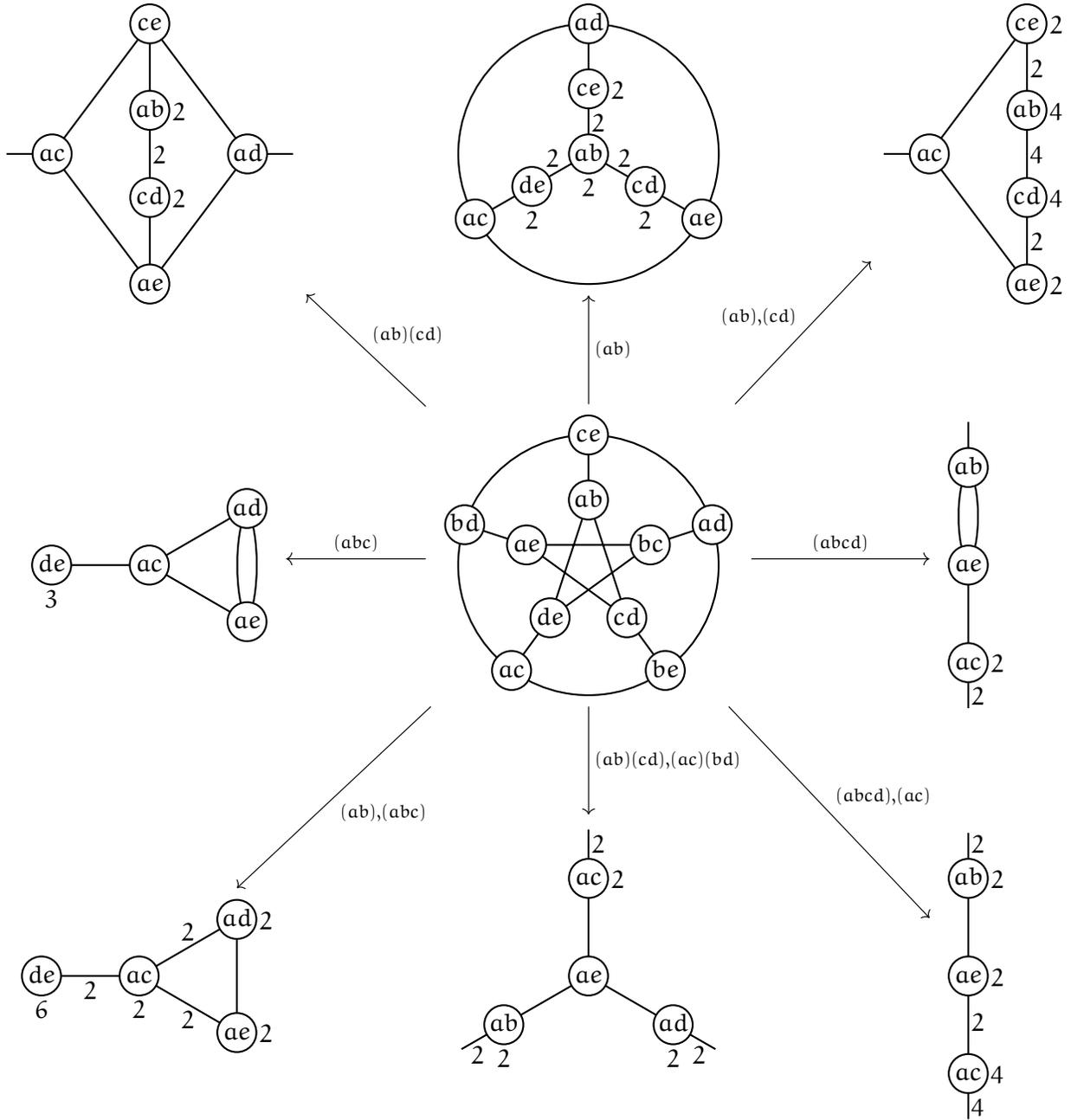

\subsection{The kernel of the pushforward}\label{subsec:kernel} We now identify the kernel of $p_*:\Jac(\tX)\to \Jac(\XX)$, the pushforward map on the Jacobians. Denote the kernels of $p_*$ on $\ZZ^{V(\tX)}$ and $\ZZ^{E(\tX)}$ by 
\begin{equation}
V_0=\Ker\left(p_*:\ZZ^{V(\tX)}\to \ZZ^{V(X)}\right)=\bigoplus_{v\in V(X)}\ZZ^{\calX_v\backslash G}_0,\quad 
H_0=\Ker\left(p_*:\ZZ^{H(\tX)}\to \ZZ^{H(X)}\right)=\bigoplus_{h\in H(X)}\ZZ^{\calX_h\backslash G}_0,
\label{eq:vkhk}
\end{equation}
where we use the identification~\eqref{eq:tXgroups}, and let $i_*:V_0\to \ZZ^{V(\tX)}$ and $i_*:H_0\to \ZZ^{H(\tX)}$ denote the canonical injections. It is elementary to verify that the maps $r_{\tX}$, $\iota_{\tX}$, and $\tau_{\tX}$ descend to maps (see Figure~\ref{fig:5})
\[
r_0:H_0\to V_0,\quad \iota_0:H_0\to H_0,\quad \tau_0:V_0\to H_0.
\]

Following the terminology of~\cite{2014ReinerTseng}, we introduce the following definitions:
\begin{definition} The \emph{voltage Laplacian} of the cover $p:\tX\to X$ is the map 
\[
L_0:V_0\to V_0,\quad L_0=r_0\circ (\Id-\iota_0)\circ \tau_0.
\]
The \emph{voltage Jacobian} of the cover $p:\tX\to X$ is the quotient
\[
\Jac_0=\Im (r_0\circ (\Id-\iota_0))/\Im L_0.
\]
\end{definition}

An elementary rank count shows that the lattices $\Im (r_0\circ (\Id-\iota_0))$ and $\Im L_0$ have full rank in $V_0$. Therefore the voltage Laplacian is non-degenerate, unlike the case of a graph $X$, where $\Im L_X$ has full rank in $\Im (r_X\circ (\Id-\iota_X))=\ZZ^{V(X)}_0$. However, $r_0\circ (\Id-\iota_0)$ is not generally surjective, and the quotients $V_0/\Im L_0$ and $\Jac_0$ need to be carefully distinguished. 

It is clear that $\Jac_0$ embeds into the kernel of $p_*:\Jac(\tX)\to \Jac(\XX)$. In fact, the two are isomorphic.

\begin{proposition} The natural inclusion map $\Jac_0\to \Ker \left(p_*:\Jac(\tX)\to \Jac(\XX)\right)$ is an isomorphism, hence the voltage Jacobian fits into an exact sequence
\begin{equation}
\begin{tikzcd}
    0\ar[r] & \Jac_0\ar[r] & \Jac(\tX)\ar[r,"p_*"] & \Jac(\XX) \ar[r] & 0.
\end{tikzcd}
\end{equation}
In particular, $|\Jac_0|=|\Jac(\tX)|/|\Jac(\XX)|$.
\label{prop:jacobians}
\end{proposition}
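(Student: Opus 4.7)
Denote $W_0 := \Im(r_0\circ(\Id-\iota_0))$, so that $\Jac_0 = W_0/\Im L_0$. The natural map is induced by the inclusions $W_0 \hookrightarrow V_0 \hookrightarrow \ZZ^{V(\tX)}_0$ together with the containment $\Im L_0 \subset \Im L_{\tX}$ (since $L_0 = L_{\tX}|_{V_0}$); its image lies in $\Ker p_*$ because $W_0 \subset V_0 = \Ker(p_*\colon \ZZ^{V(\tX)} \to \ZZ^{V(X)})$. Using that $p_*$ is surjective and intertwines the Laplacians, one checks $p_*^{-1}(\Im L_{\XX}) = V_0 + \Im L_{\tX}$, so $\Ker p_* \cong V_0/(V_0 \cap \Im L_{\tX})$. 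The map thus factors as
\[
W_0/\Im L_0 \hookrightarrow V_0/\Im L_0 \twoheadrightarrow V_0/(V_0 \cap \Im L_{\tX}),
\]
and is an isomorphism if and only if (I) $V_0 = W_0 + (V_0 \cap \Im L_{\tX})$ (surjectivity) and (II) $W_0 \cap \Im L_{\tX} = \Im L_0$ (injectivity).

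For (II), take $D \in W_0 \cap \Im L_{\tX}$ and write $D = L_{\tX}(\tE) = r_0(\Id-\iota_0)(\alpha)$ with $\tE \in \ZZ^{V(\tX)}$ and $\alpha \in H_0$. Then $\tau_{\tX}(\tE) - \alpha \in \Ker(r_{\tX}(\Id-\iota_{\tX}))$, and applying $p_*$ and using $p_*(\alpha)=0$ gives $n\,\tau_{\XX}(\mu) \in p_*(\Ker(r_{\tX}(\Id-\iota_{\tX})))$, where $\mu := \sum_v (c_v/c(v))\,v$ generates $\Ker L_{\XX}$ (cf.\ the proof of Theorem~\ref{thm:matrixtree}) and $p_*(\tE) = n\mu$ is forced by $p_*(D)=0$. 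A combinatorial analysis of $\Ker(r_{\tX}(\Id-\iota_{\tX}))$ via its natural decomposition into legs, symmetric edge-pairs, and cycles shows that $p_*(\Ker(r_{\tX}(\Id-\iota_{\tX})))$ meets the rank-one lattice $\ZZ\,\tau_{\XX}(\mu)$ in exactly $(|G|/c_v)\ZZ\,\tau_{\XX}(\mu)$, forcing $(|G|/c_v) \mid n$. Since $p_*(\mathbf{1}_{\tX}) = (|G|/c_v)\mu$ by the orbit--stabilizer theorem, the element $\tE' := \tE - (nc_v/|G|)\mathbf{1}_{\tX}$ lies in $V_0$ and satisfies $L_{\tX}(\tE') = D$, exhibiting $D \in L_0(V_0) = \Im L_0$.

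For (I), given $D \in V_0$, connectedness of $\tX$ provides $\beta \in \ZZ^{H(\tX)}$ with $D = r_{\tX}(\Id-\iota_{\tX})(\beta)$; since $p_*(D)=0$, we have $p_*(\beta) \in \Ker(r_X(\Id-\iota_X))$. The same structural analysis yields a decomposition $p_*(\beta) = p_*(\gamma) + \tau_{\XX}(F)$ with $\gamma \in \Ker(r_{\tX}(\Id-\iota_{\tX}))$ and $F \in \ZZ^{V(X)}$. Lifting $F$ to $\tF \in \ZZ^{V(\tX)}$ and setting $\beta' := \beta - \gamma - \tau_{\tX}(\tF)$, one checks $\beta' \in H_0$, and
\[
D = r_{\tX}(\Id-\iota_{\tX})(\beta') + L_{\tX}(\tF) \in W_0 + \Im L_{\tX},
\]
proving (I). The main obstacle in both (I) and (II) is the structural sub-lemma describing the image of $p_*\colon \Ker(r_{\tX}(\Id-\iota_{\tX})) \to \Ker(r_X(\Id-\iota_X))$ and its interaction with $\tau_{\XX}(\ZZ^{V(X)})$; establishing both the divisibility in (II) and the lifting decomposition in (I) requires a careful analysis of how the voltage assignment $\beta\colon H(X) \to G$ governs the lifting of half-edge relations from $X$ to $\tX$.
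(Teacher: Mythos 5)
Your reduction is sound: identifying $\Ker\left(p_*:\Jac(\tX)\to\Jac(\XX)\right)$ with $V_0/(V_0\cap\Im L_{\tX})$ via $p_*^{-1}(\Im L_{\XX})=V_0+\Im L_{\tX}$ is correct, and conditions (I) and (II) are exactly what must be verified. The problem is that both conditions are then discharged by appeal to a ``structural sub-lemma'' describing $p_*\bigl(\Ker(r_{\tX}\circ(\Id-\iota_{\tX}))\bigr)$ --- its intersection with $\ZZ\,\tau_{\XX}(\mu)$ for (II), and the decomposition $\Ker(r_X\circ(\Id-\iota_X))\subset p_*\bigl(\Ker(r_{\tX}\circ(\Id-\iota_{\tX}))\bigr)+\Im\tau_{\XX}$ for (I) --- and this sub-lemma is never proved; you acknowledge as much in your closing sentence. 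That lemma is not a routine verification: $\Ker(r_{\tX}\circ(\Id-\iota_{\tX}))$ is a lattice of rank $|H(\tX)|-|V(\tX)|+1$ whose interaction with $p_*$ depends on the voltage assignment in an essential way, and pinning down the precise index $(|G|/c_v)$ in (II) is where all the difficulty of the proposition is concentrated. As written, the proof establishes only that the proposition follows from an unproven combinatorial statement, so there is a genuine gap.

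It is worth contrasting this with the paper's route, which avoids the kernel computation entirely. Using the isomorphism $A/(\Im g+\Ker f)\simeq \Im f/\Im(f\circ g)$ (Proposition 2.2 of~\cite{2014ReinerTseng}), each of $\Jac_0$, $\Jac(\tX)$, $\Jac(\XX)$ is presented as a quotient of the relevant half-edge group by $\Im\tau+\Ker\partial$; passing to Pontryagin duals converts $\Ker p_*$ into $\Coker p_*^{\vee}$, and the dual map on half-edge groups is explicitly $h\mapsto\sum_{\th\in p^{-1}(h)}\th$, whose cokernel is visibly $H_0$. The whole combinatorial difficulty you isolate is thereby dissolved into the single observation $\ZZ^{H(\tX)}/p_*^{\vee}(\ZZ^{H(X)})\simeq H_0$. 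If you want to salvage your direct approach, you would need to actually prove your structural sub-lemma (the verification in small examples, e.g.\ a single flipped edge or a doubled edge with dilated endpoints, suggests it is true, but that is not a proof); otherwise I would recommend adopting the duality argument.
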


\begin{proof} This result generalizes Theorem 1.1 in~\cite{2014ReinerTseng} to the case of non-free $G$-actions, and our proof is essentially a copy of their proof. First, we recall Proposition 2.2 from~\cite{2014ReinerTseng}, which states that, given a diagram $A \overset{f}{\underset{g}\rightleftarrows}B$ of abelian groups, the map $f$ induces an isomorphism
\[
A/(\Im g+\Ker f)\simeq \Im f/\Im (f\circ g).
\]
Hence, denoting
\[
\partial_0=r_0\circ (\Id-\iota_0),\quad
\partial_{\tX}=r_{\tX}\circ (\Id-\iota_{\tX}),\quad
\partial_X=r_X\circ (\Id-\iota_X),
\]
we instead work with the groups
\[
\Jac_0\simeq H_0/(\Im \tau_0+\Ker \partial_0),\quad 
\Jac(\tX)\simeq \ZZ^{H(\tX)}/(\Im \tau_{\tX}+\Ker \partial_{\tX}),\quad 
\Jac(\XX)\simeq \ZZ^{H(X)}/(\Im \tau_{\XX}+\Ker \partial_X).
\]
Second, we replace each of the three finite abelian groups $A=\Jac_0,\Jac(\tX),\Jac(\XX)$ with its Pontryagin dual $A^{\vee}=\Hom(A,\QQ/\ZZ)$. The dual groups are isomorphic, but the arrows now point in the opposite direction:
\[
\begin{tikzcd}
    0 & \Jac_0\ar[l] & \Jac(\tX) \ar[l]& \Jac(\XX) \ar[l,"p^\vee_*"'] & 0\ar[l].
\end{tikzcd}
\]
To show that $\Ker p_*\simeq \Jac_0$, we instead show that $\Coker p_*^{\vee}\simeq \Jac_0$. For each $h\in H(X)$, the map $p_*:\ZZ^{H(\tX)}\to \ZZ^{H(X)}$ sends the generator corresponding to each half-edge $\th\in p^{-1}(h)=\calX_h\backslash G$ to $h$. Hence the Pontryagin dual $p_*:\ZZ^{H(X)}\to \ZZ^{H(\tX)}$ sends $h\in H(X)$ to the sum of the $\th$ over all $\th\in \calX_h\backslash G$. It is therefore clear that $\ZZ^{H(X)}/p_*^{\vee}(\ZZ^{H(X)})\simeq H_0$, and hence 
\[
\Coker p_*^{\vee}=\ZZ^{H(\tX)}/(\Im \tau_{\tX}+\Ker \partial_{\tX}+p_*^{\vee}(\ZZ^{H(X)}))\simeq 
H_0/(\Im \tau_0+\Ker \partial_0)=\Jac_0.
\]

\end{proof}

\begin{remark} Let $p:\tX\to X$ be a free $G$-cover, in other words assume that the $G$-action on $\tX$ is free. By Equation~\eqref{eq:northshield}, the orders of $\Jac(\tX)$ and $\Jac(X)$ can be computed from the Taylor expansions at $u=1$ of the Ihara zeta functions $\zeta(u,\tX)$ and $\zeta(u,X)$. In fact, $\zeta(u,X)$ divides $\zeta(u,\tX)$, and the ratio is a product of the Artin--Ihara $L$-functions $L(u,X,\rho)$ associated to the cover $p:\tX\to X$ corresponding to the nontrivial irreducible representations $\rho$ of $G$ (the $L$-function of the trivial representation is equal to $\zeta(u,X)$, see~\cite{2000StarkTerras} or~\cite{2010Terras}). Hence the order of $\Jac_0$ can likewise be computed by looking at the $u=1$ Taylor expansion of this product. 

Assuming that the Ihara zeta function of a graph of groups is defined and satisfies Bass's three-term determinant formula, Theorem~\ref{thm:zeta} shows that the order $\Jac(\XX)$ can be computed from the Taylor expansion of $\zeta(u,\XX)$ at $u=1$. It is therefore natural to expect that $\zeta(u,\tX)$ is equal to the product of the Artin--Ihara $L$-functions $L(u,\XX,\rho)$ of the graph of groups $\XX$, suitably defined, where the product runs over the irreducible representations of $G$ and where $L(u,\XX,1)=\zeta(u,\XX)$. If this is the case, then $|\Jac_0|=|\Jac(\tX)|/|\Jac(\XX)|$ can be found from the Taylor expansion of the product of the $L$-functions of the cover $\tX\to X$ associated to the nontrivial irreducible representations of $G$. 

The project of defining the Ihara zeta function and the Artin--Ihara $L$-function of a graph of groups was carried out by the second author in~\cite{2021Zakharov} in the case then $G$ acts with trivial stabilizers on the edges of $\tX$. In future work, the second author intends to complete this project and define these functions for arbitrary graphs of groups. 
    
\end{remark}

\section{Double covers} We now consider the group $G=\ZZ/2\ZZ$ acting on a graph $\tX$. We call the quotient map $p:\tX\to X$ a \emph{double cover}, and introduce some terminology borrowed from tropical geometry. 

Let $v\in V(X)$ be a vertex. We say that $v$ is \emph{undilated} if it has two preimages in $\tX$ exchanged by the involution, which we arbitrarily label $p^{-1}(v)=\{\tv^{\pm}\}$, and \emph{dilated} if it has a unique preimage, which we label $p^{-1}(v)=\{\tv\}$. We similarly say that a half-edge $h\in H(X)$ is \emph{undilated} if $p^{-1}(h)=\{\th^{\pm}\}$ and \emph{dilated} if $p^{-1}(h)=\{\th\}$. A dilated half-edge is rooted at a dilated vertex, so the set of dilated half-edges and vertices forms a subgraph $X_{\dil}\subset X$, called the \emph{dilation subgraph}. The root vertex $v=r_X(h)$ of an undilated half-edge $h\in H(X)$ may be dilated or undilated. In the latter case, we label the preimages in such a way that $r_{\tX}(\th^{\pm})=\tv^{\pm}$, in other words a half-edge with a sign is rooted at either a vertex with the same sign or a vertex with no signs. Finally, we say that the double cover $p:\tX\to X$ is \emph{free} if $X_{\dil}=\emptyset$ (in other words, if the $\ZZ/2\ZZ$-action is free) and \emph{dilated} otherwise. 

We now construct the \emph{free graph} $X_{\fr}$ corresponding to the double cover $p:\tX\to X$ as follows. The vertices of $X_{\fr}$ are the undilated vertices of $X$, so $V(X_{\fr})=V(X)\backslash V(X_{\dil})$. The edges of $X_{\fr}$ are the undilated edges of $X$ both of whose root vertices are undilated. The legs of $X_{\fr}$ come in two types. First, each undilated leg of $X$ that is rooted at an undilated vertex is a leg of $X$. Second, consider an edge $e=\{h,h'\}\in E(X)$ having an undilated root vertex $r(h)=u$ and a dilated root vertex $r(h')=v$. For each such edge, we attach $h$ to $X_{\fr}$ as a \emph{leg} rooted at $u$ (so that $r_{X_{\fr}}(h)=r_X(h)=u$ as before but $\iota_{X_{\fr}}(h)=h$ instead of $\iota_X(h)=h'$). We call these \emph{null legs}, in order to distinguish them from the legs coming from $X$. In other words, $X_{\fr}$ is obtained from $X$ by removing $X_{\dil}$, and turning each loose edge (having one root vertex on $X_{\fr}$ and one missing root vertex) into a leg. 

We now define a parity assignment $\ep$ on the half-edges of $X_{\fr}$ as follows: 
\begin{enumerate}

    \item Let $e=\{h_1,h_2\}\in E(X_{\fr})$ be a edge (having undilated root vertices, which may be the same). Our choice of labels for the preimages of the root vertices determines a labeling $\th^{\pm}_1$, $\th^{\pm}_2$ for the preimages of the half-edges. With respect to this choice, we define
\[
\ep(e)=\ep(h_1)=\ep(h_2)=\begin{cases} +1, & \iota_{\tX}(\th_1^{\pm})=\th_2^{\pm},\\ 
-1, & \iota_{\tX}(\th_1^{\pm})=\th_2^{\mp}.
\end{cases}
\]
    We say that $e$ is \emph{even} if $\ep(e)=1$ and \emph{odd} if $\ep(e)=-1$.
    
    \item Let $l\in L(X_{\fr})$ be a leg. If $l$ is a leg of $X$ (in other words, if it is not a null leg), then $p^{-1}(l)=\{\widetilde{l}^{\pm}\}$, and there are two possibilities: either $\iota_{\tX}(\widetilde{l}^{\pm})=\widetilde{l}^{\pm}$, so $p^{-1}(l)$ is a pair of legs exchanged by the involution, or $\iota_{\tX}(\widetilde{l}^{\pm})=\widetilde{l}^{\mp}$, so $e=\{\widetilde{l}^+,\widetilde{l}^-\}$ is an edge folded by the involution. We therefore set
\[
\ep(l)=\begin{cases}
    +1, & \iota_{\tX}(\widetilde{l}^{\pm})=\widetilde{l}^{\pm},\\
    -1, & \iota_{\tX}(\widetilde{l}^{\pm})=\widetilde{l}^{\mp},\\
    0, & l\mbox{ is a null leg.}
\end{cases}
\]
We say that a non-null leg $l$ is \emph{even} if $\ep(l)=1$ and \emph{odd} if $\ep(l)=-1$. 

\end{enumerate} 

The parity assignment $\ep$ gives $X_{\fr}$ the structure of a \emph{signed graph}, and this construction already occurs in~\cite{1982Zaslavsky} for the case of free double covers (so null legs do not appear). The values of $\ep$ on the edges depend the labeling $\tv^{\pm}$ of the preimages $\tv^{\pm}$ of the undilated vertices. The cocycle $[\ep]\in H^1(X_{\fr},\ZZ/2\ZZ)$ in the simplicial cohomology group, however, is well-defined. The leg parity assignement does not depend on any choices, and the cover $p:\tX\to X$ can be uniquely reconstructed from the choice of a dilation subgraph $X_{\dil}\subset X$, an element $[\ep]\in H^1(X_{\fr},\ZZ/2\ZZ)$ defining the edge parity, and a choice of leg parity. 

\subsection{The voltage Laplacian of a double cover} We now compute the voltage Laplacian $L_0$ and the voltage Jacobian $\Jac_0$ of the double cover $p:\tX\to X$ in terms of the free graph $X_{\fr}$. We introduce the following diagram: 
\begin{equation}
\begin{tikzcd}
\ZZ^{H(X_{\fr})}\arrow[r,bend left,"r_{\fr}"] \arrow[loop left,"\iota_{\fr}"]& \ZZ^{V(X_{\fr})} \arrow[l,bend left,"\tau_{\fr}"']. 
\end{tikzcd}
\label{eq:freegraph}
\end{equation}
Here $r_{\fr}=r_{X_{\fr}}$ is the ordinary root map of $X_{\fr}$ and $\tau_{\fr}=\tau_{X_{fr}}$ is its transpose (see Equation~\eqref{eq:tau}). The involution, however, is twisted by the parity assignment:
\begin{equation}
\iota_{\fr}(h)=\ep(h)\iota_{X_{\fr}}(h).
\label{eq:twistediota}
\end{equation}

In terms of the identification given by Equation~\eqref{eq:vkhk}, we have $\ZZ_0^{\calX_v\backslash G}=\ZZ(\tv^+-\tv^-)$ for an undilated vertex $v\in V(X_{\fr})$, while if $v$ is dilated then $\ZZ_0^{\calX_v\backslash G}$ is trivial. Hence we can identify $V_0$ with $\ZZ^{V(X_{\fr})}$. Similarly, $\ZZ_0^{\calX_h\backslash G}=\ZZ(\th^+-\th^-)$ if $h\in H(X)$ is an undilated half-edge and is trivial otherwise. However, $H_0$ is larger than $\ZZ^{H(X_{\fr})}$, since it has generators corresponding to undilated half-edges rooted at dilated vertices. These generators, however, do not appear in the image of $r_0$, and hence we can compute the Laplacian $L_0$ by restricting to $\ZZ^{H(X_{\fr})}$.

\begin{proposition} \label{prop:0isfree} Let $\tX$ be a graph with a $\ZZ/2\ZZ$-action, let $p:\tX\to X$ be the quotient map, let $X_{\fr}$ be the free graph, and let $\ep$ be the parity assignment on $H(X_{\fr})$ defined above. Under the identification of $V_0$ with $V(X_{\fr})$, the voltage Laplacian $L_0:V_0\to V_0$ and the voltage Jacobian are equal to
\[
L_0=r_{\fr}\circ(\Id-\iota_{\fr})\circ \tau_{\fr},\quad \Jac_0=(\Im r_{\fr}\circ(\Id-\iota_{\fr}))/\Im L_0.
\]
The matrix of the voltage Laplacian $L_0:V_0\to V_0$ is explicitly given by 
\[
L_{0,uv}=\begin{cases} |\{\mbox{non-loop edges at }u\}|+4|\{\mbox{odd loops at }u\}|+2|\{\mbox{odd legs at }u\}|+|\mbox{null legs at }u\}|, & u=v,\\
|\{\mbox{odd edges between }u\mbox{ and }v\}|-
|\{\mbox{even edges between }u\mbox{ and }v\}|,& u\neq v.
\end{cases}
\]
    
\end{proposition}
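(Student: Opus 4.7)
The plan is to unwind the definitions of $V_0$, $H_0$, and the maps $r_0,\iota_0,\tau_0$ explicitly in terms of the free graph $X_{\fr}$, and then read off the matrix entries of $L_0$ by a direct tally of the contributions of edges, loops, and legs.

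First I would make the identifications explicit. For $G=\ZZ/2\ZZ$, the summand $\ZZ^{\calX_v\backslash G}_0$ of $V_0$ is generated by $\tv^+-\tv^-$ when $v$ is undilated and vanishes when $v$ is dilated, giving $V_0\simeq \ZZ^{V(X_{\fr})}$ via $v\mapsto \tv^+-\tv^-$; similarly $H_0$ has a rank-one summand $\ZZ(\th^+-\th^-)$ for every undilated $h\in H(X)$. Because $\iota_{\tX}$ commutes with the $G$-action, any dilated half-edge is rooted at a dilated vertex, so for undilated $v\in V(X)$ every half-edge in $T_vX$ is undilated; these are precisely the half-edges of $X_{\fr}$ rooted at $v$ (edges, non-null legs, and null legs), and under the identification $\tau_0(v)=\sum_{h\in T_vX}(\th^+-\th^-)$ corresponds to $\tau_{\fr}(v)$.

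Next I would verify $r_0\circ(\Id-\iota_0)=r_{\fr}\circ(\Id-\iota_{\fr})$ on the image of $\tau_0$, case by case. For an edge $\{h_1,h_2\}\in E(X_{\fr})$ and for a non-null leg $l\in L(X_{\fr})$, the identities $\iota_0(\th_1^+-\th_1^-)=\ep(e)(\th_2^+-\th_2^-)$ and $\iota_0(\widetilde{l}^+-\widetilde{l}^-)=\ep(l)(\widetilde{l}^+-\widetilde{l}^-)$ follow directly from the definition of $\ep$. The subtle case is a null leg $h$ with $\iota_X(h)=h'$ rooted at a dilated vertex $v'$: here $\iota_0(\th^+-\th^-)=\pm(\widetilde{h'}^+-\widetilde{h'}^-)$ is nonzero in $H_0$, yet both $\widetilde{h'}^\pm$ root to the unique preimage $\tv'$, so $r_0$ annihilates it, in exact agreement with $\iota_{\fr}(h)=0$. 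This gives the claimed factorization $L_0=r_{\fr}\circ(\Id-\iota_{\fr})\circ\tau_{\fr}$. For the statement about $\Jac_0$ I would then check that $\Im(r_0\circ(\Id-\iota_0))=\Im(r_{\fr}\circ(\Id-\iota_{\fr}))$: the only generators of $H_0$ not indexed by $H(X_{\fr})$ are those $\th^+-\th^-$ with $h$ undilated at a dilated vertex, and such a generator maps under $r_0\circ(\Id-\iota_0)$ either to $0$ or to $\pm v'\in\ZZ^{V(X_{\fr})}$ for the undilated vertex $v'$ on the other side, which is already hit by the corresponding null leg.

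Finally, the explicit matrix entries follow by tallying the contribution of each half-edge type to $L_0(v)$: a non-loop edge at $v$ with parity $\ep(e)$ contributes $+1$ to $L_{0,vv}$ and $-\ep(e)$ to the entry at the other endpoint; an even loop contributes $0$ since $(\Id-\iota_{\fr})(h+h')=0$, while an odd loop gives $(\Id-\iota_{\fr})(h+h')=2(h+h')$ and hence $4v$; an even leg gives $0$, an odd leg gives $2v$, and a null leg gives $v$. Summing over all half-edges rooted at $v$ produces the stated formula. The main obstacle is purely notational bookkeeping: keeping straight the three sets $H(X)$, $H(X_{\fr})$, and their preimages in $H(\tX)$, and in particular handling the null-leg case where the relevant information about $\iota_{\tX}$ is attached to a half-edge that does not appear in $X_{\fr}$ at all.
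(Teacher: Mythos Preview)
Your proposal is correct and follows essentially the same route as the paper's proof: explicit identification of the generators of $V_0$ and $H_0$, verification that $\tau_0$ matches $\tau_{\fr}$, a case-by-case check that $r_0\circ(\Id-\iota_0)$ agrees with $r_{\fr}\circ(\Id-\iota_{\fr})$ (even/odd edges, even/odd legs, null legs), the observation that the extra generators of $H_0$ coming from undilated half-edges rooted at dilated vertices contribute nothing new to the image, and finally the tally of matrix entries. One small slip: the fact that every half-edge at an undilated vertex is undilated follows from $r_{\tX}$ (not $\iota_{\tX}$) commuting with the $G$-action, but this does not affect the argument.
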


\begin{proof} By abuse of notation, for an undilated vertex $v\in V(X_{\fr})$ we denote $v=\tv^+-\tv^-$ the corresponding generator of $V_0$; this identifies the generators of $\ZZ^{V(X_{\fr})}$ and $V_0$. Similarly, if $h\in H(X)\backslash H(X_{\dil})$ is an undilated edge we denote $h=\th^+-\th^-$ the corresponding generator of $H_0$. If $r_X(h)$ is an undilated vertex then $h$ is also a generator of $\ZZ^{H(X_{\fr})}$, so we view the latter as a subgroup of $H_0$.

It is clear that the maps $\tau_0:\ZZ^{V_0}\to \ZZ^{H_0}$ and $\tau_{\fr}:\ZZ^{V(X_{\fr})}\to \ZZ^{H(X_{\fr})}$ agree under these identifications. Given an undilated half-edge $h\in H(X)\backslash H(X_{\dil})$ rooted at $v=r_X(h)$, we have
\[
r_0(\th^+-\th^-)=\begin{cases} \tv^+-\tv^-,& v\mbox{ is undilated},\\
0, & v\mbox{ is dilated}.
\end{cases}
\]
Hence the restriction of $r_0:\ZZ^{H_0}\to \ZZ^{V_0}$ to $\ZZ^{H(X_{\fr})}$ agrees with $r_{\fr}:\ZZ^{H(X_{\fr})}\to \ZZ^{V(X_{\fr})}$.

Now let $h\in H(X_{\fr})$ be a half-edge rooted at an undilated vertex $v=r_{\fr}(h)$. We need to check that $r_{\fr}\circ(\Id-\iota_{\fr})(h)$ agrees with $r_0\circ(\Id-\iota_0)(\th^+-\th^-)$. There are several cases to consider.

\begin{enumerate}
    \item $h$ is part of an even edge $e=\{h,h'\}\in E(X_{\fr})$, where the vertex $v'=r_{\fr}(h')$ is also undilated. Then $\iota_{\tX}(\th^{\pm})=\th'^{\pm}$, so
    \[
    r_{\fr}\circ(\Id-\iota_{\fr})(h)=r_{\fr}(h-h')=v-v'=\tv^+-\tv^--\tv'^++\tv'^-=r_0\circ(\Id-\iota_0)(\th^+-\th^-).
    \]
    The half-edge $h$ contributes $+1$ to $L_{0,vv}$ and $-1$ to $L_{0,vv'}$, and these contributions cancel if $e$ is a loop. 
    \item $h$ is part of an odd edge $e=\{h,h'\}\in E(X_{\fr})$, where the vertex $v'=r_{\fr}(h')$ is also undilated. Then $\iota_{\tX}(\th^{\pm})=\th'^{\mp}$, so
    \[
    r_{\fr}\circ(\Id-\iota_{\fr})(h)=r_{\fr}(h+h')=v+v'=\tv^+-\tv^-+\tv'^+-\tv'^-=r_0\circ(\Id-\iota_0)(\th^+-\th^-).
    \]
    The half-edge $h$ contributes $+1$ to $L_{0,vv}$ and $+1$ to $L_{0,vv'}$. If $v=v'$ ($e$ is an odd loop), the total contribution from $h$ and $h'$ to $L_{0,vv}$ is equal to $4$.
    \item $h$ is an even leg, then $\iota_{\fr}(h)=h$ and $\iota_{\tX}(\th^{\pm})=\th^{\pm}$ since $\th^{\pm}$ are also legs. Thus
    \[
    r_{\fr}\circ(\Id-\iota_{\fr})(h)=0=r_0\circ(\Id-\iota_0)(\th^+-\th^-)
    \]
    and $h$ does not contribute to the voltage Laplacian.
    \item $h$ is an odd leg and $\th^{\pm}$ form an edge of $\tX$. Then $\iota_{\fr}(h)=-h$ and $\iota_{\tX}(\th^{\pm})=\th^{\mp}$, hence 
    \[
    r_{\fr}\circ(\Id-\iota_{\fr})(h)=2r_{\fr}(h)=2v=2\tv^+-2\tv^-=r_0\circ(\Id-\iota_0)(\th^+-\th^-)
    \]
    and $h$ contributes $+2$ to $L_{0,vv}$.
    \item $h$ is a null leg corresponding to an edge $e=\{h,h'\}\in E(X)$ with dilated root vertex $v'=r_X(h')$. Then $\iota_{\fr}(h)=0$ and we can assume that $\iota_{\tX}(\th^{\pm})=\th'^{\pm}$, so
    \[
    r_{\fr}\circ(\Id-\iota_{\fr})(h)=r_{\fr}(h)=v=\tv^+-\tv^-=r_0\circ(\Id-\iota_0)(\th^+-\th^-)
    \]
    because $r_0(\th'^+-\th'^-)=0$. Hence $h$ contributes $+1$ to $L_{0,vv}$.
    
\end{enumerate}

It follows that $L_0=r_{\fr}\circ(\Id-\iota_{\fr})\circ \tau_{\fr}$, and to complete the proof it is sufficient to show that the image of $H(X_{\fr})\subset H_0$ under the map $r_0\circ (\Id-\iota_0)$ is equal to the image of all of $H_0$. Let $e=\{h,h'\}\in E(X)$ be an undilated edge with undilated root vertex $v=r_X(h)$ and dilated root vertex $v'=r_X(h')$, then $\th'^+-\th'^-$ is a generator of $H_0$ but not $H(X_{\fr})$. We verify that 
\[
r_0\circ (\Id-\iota_0)(\th'^+-\th'^-)=r_0(\th'^+-\th'^--\th^++\th^-)=-\tv^++\tv^-=-v=-r_{\fr}\circ(\Id-\iota_{\fr})(h),
\]
where $h=\th^+-\th^-$ is a generator of $H(X_{\fr})$. Hence adding the $\th'^+-\th'^-$ as a generator to $H(X_{\fr})$ does not increase the image.
\end{proof}

We observe that the matrix of the voltage Laplacian $L_0$ of the double cover $p:\tX\to X$ is obtained from the signed graph Laplacian of the free subgraph $X_{\fr}$ (see Definition 9.4 in~\cite{2014ReinerTseng}) by adding the contributions from the null legs.

\subsection{Ogods and the order of the voltage Jacobian of a double cover} We now derive a combinatorial formula for the order of the voltage Jacobian of a double cover $p:\tX\to X$. To make our formula self-contained, we express it in terms of $\tX$ and $X$, and not in terms of the auxiliary graph $X_{\fr}$. The only terminology that we retain is that we distinguish \emph{odd} and \emph{even} undilated legs of $X$: the preimage of the former is a single edge folded by the involution, while the preimage of the latter is a pair of legs. The following paragraphs are expository, and the interested reader may skip directly to Definition~\ref{def:ogod} and Theorem~\ref{thm:Kirchhoff}.

Kirchhoff's matrix tree theorem states that the order of the Jacobian of a connected graph $X$ is equal to the number of spanning trees of $X$, and a spanning tree of $X$ may be characterized as a minimal connected subgraph containing all vertices of $X$. Our goal is to define an analogous property for subgraphs of the target graph of a double cover.

Let $\tX$ be a graph with a $\ZZ/2\ZZ$-action and let $p:\tX\to X$ be the corresponding double cover. We say that a (possibly disconnected) subgraph $Y\subset X$ is \emph{relatively connected} if each connected component of $Y$ has connected preimage in $\tX$. We now characterize connected subgraphs $Y\subset X$ that are minimal with respect to this property, in other words we require that $p^{-1}(Y)$ be connected but that the graph obtained from $Y$ by removing any edge or leg (and retaining the root vertices) have a connected component with disconnected preimage in $\tX$. We make the following simple observations.

\begin{enumerate}
    \item A connected subgraph $Y\subset X$ having at least one dilated vertex is relatively connected. In particular, $Y$ is not minimally relatively connected if it has at least one dilated edge or leg, since this edge or leg may be removed, or if it has at least two dilated vertices. Similarly, if $Y$ has exactly one dilated vertex but is not a tree, then $Y$ is not minimally relatively connected.
    \item A relatively connected subgraph $Y\subset X$ having at least one even leg is not minimally relatively connected, since the leg may be removed.
    \item A connected subgraph $Y\subset X$ having at least one odd leg $l\in L(Y)$ is relatively connected, since the preimage edge $e=p^{-1}(l)$ connects the (possibly disjoint) preimages of $Y\backslash \{l\}$. The subgraph $Y$ is not minimally relatively connected unless it is a tree.
    
    \item Let $Y\subset X$ be a subgraph containing no dilated vertices and no legs. By covering space theory, the restricted double cover $p|_{p^{-1}(Y)}:p^{-1}(Y)\to Y$ corresponds to an element of $\Hom(\pi_1(Y),\ZZ/2\ZZ)=H^1(Y,\ZZ/2\ZZ)$. If $Y$ is a tree then the cover is trivial and hence disconnected, so $Y$ is not relatively connected. If $Y$ has genus one (in other words, if it has a unique cycle), then $H^1(Y,\ZZ/2\ZZ)=\ZZ/2\ZZ$ and $Y$ has two covers: the trivial disconnected one and the nontrivial connected one. In the latter case, it is clear that $Y$ is minimally relatively connected, since removing any edge produces a tree. Finally, suppose that $Y$ has genus at least two (in other words, it has at least two independent cycles) and $p|_{p^{-1}(Y)}:p^{-1}(Y)\to Y$ is a nontrivial double cover. It is an easy exercise to show that $Y$ is not minimally relatively connected, in other words there is an edge $e\in E(Y)$ such that each connected component of $Y\backslash \{e\}$ (there may be one or two) has connected preimage in $\tX$.
    
\end{enumerate}

We can therefore characterize minimal relatively connected subgraphs of $X$ that contain all vertices of $X$, which are the double cover analogues of spanning trees. One important difference is that these subsets now come with a weight assignment.

\begin{definition} \label{def:ogod}
    
Let $\tX$ be a graph with a $\ZZ/2\ZZ$-action and let $p:\tX\to X$ be the quotient map. An \emph{ogod component} $Y$ of \emph{weight} $w(Y)$ is a connected subgraph $Y\subset X$ having no dilated edges, dilated legs, or even legs, and that is of one of the following three types:
\begin{enumerate}
\item $Y$ is a tree having a unique dilated vertex, and no legs. We say that $w(Y)=1$.
\item $Y$ is a tree having no dilated vertices and a unique odd leg. We say that $w(Y)=2$.
\item $Y$ has no legs and a unique cycle, and $p^{-1}(Y)\subset \tX$ is connected. We say that $w(Y)=4$.
\end{enumerate}
Now let $B$ be a set of $n$ undilated edges and odd legs of $X$, where $n$ is the number of undilated vertices of $X$. Let $X|_B$ be the graph obtained from $X$ by deleting all edges and legs not in $B$, including all dilated edges and legs, and retaining all vertices, and let $X_1,\ldots,X_k$ be the connected components of $X|_B$. We say that $B$ is an \emph{ogod} if each of the $X_i$ is an ogod component, and the \emph{weight} $w(B)$ of the ogod is the product of the weights of the $X_i$.
\end{definition} 

The term \emph{ogod} is an acronym for \emph{odd genus one decomposition}: for a free double cover $p:\tX\to X$ without legs, the connected components $X_i$ of an ogod are graphs of genus one such that the restricted covers $p|_{p^{-1}(X_i)}:p^{-1}(X_i)\to X_i$ are given by the odd (nontrivial) elements of $H^1(X_i,\ZZ/2\ZZ)$. This terminology was introduced by the second author in~\cite{2022LenZakharov}, who was unaware of the history of this definition going back to the seminal paper~\cite{1982Zaslavsky}. Howver, to the best of the authors' knowledge, there does not appear to be an established term describing such subsets in the combinatorics literature.

We are now ready to state the analogue of Kirchhoff's matrix tree theorem for a dilated double cover $p:\tX\to X$, with ogods playing the role of spanning trees. 

\begin{theorem} \label{thm:Kirchhoff} Let $\tX$ be a graph with a non-free $\ZZ/2\ZZ$-action and let $p:\tX\to X$ be the quotient map. The order of the voltage Laplacian is equal to
\begin{equation}
\label{eq:ogodformula}
|\Jac_0|=\sum_{B}w(B),
\end{equation}
where the sum is taken over all ogods $B$ of $X$.

\end{theorem}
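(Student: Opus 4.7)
The plan is to apply a Cauchy--Binet argument to an explicit factorization of the voltage Laplacian, following the pattern of Kirchhoff's theorem but adapted to the signed structure coming from the parity assignment and enhanced by the contribution of null legs.

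First, I would invoke Proposition~\ref{prop:0isfree} to identify $L_0$ with the twisted Laplacian $r_{\fr}(\Id - \iota_{\fr})\tau_{\fr}$ on the free graph $X_{\fr}$. Because $X$ is connected and $X_{\dil}\neq\emptyset$, every connected component of $X_{\fr}$ is joined in $X$ to $X_{\dil}$ by an edge, which produces a null leg in $X_{\fr}$. A null leg at $v$ contributes $e_v$ to $\Im(r_{\fr}(\Id-\iota_{\fr}))$, and combining this with the edge relations $e_u\pm e_v$ along paths within each component shows that $r_{\fr}(\Id-\iota_{\fr})$ is surjective onto $\ZZ^{V(X_{\fr})}$. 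Hence $|\Jac_0|=|\det L_0|$.

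Next, I would introduce an orientation on $X_{\fr}$ together with an $n\times m$ matrix $B$ (where $n=|V(X_{\fr})|$ and $m$ equals the number of edges of $X_{\fr}$ plus the number of odd and null legs) whose columns are
\[
b_e = e_{s(e)} - \ep(e)\, e_{t(e)} \text{ for each edge } e,\qquad b_l = e_{r(l)} \text{ for each odd or null leg } l,
\]
and a diagonal weight matrix $\Sigma$ with $\Sigma_{ee}=1$ for edges, $\Sigma_{ll}=2$ for odd legs, and $\Sigma_{ll}=1$ for null legs. A direct verification, matching the diagonal and off-diagonal entries with the explicit formula of Proposition~\ref{prop:0isfree} (including the factors $4$ at odd loops and $2$ at odd legs), produces the factorization $L_0 = B\Sigma B^t$.

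Applying the Cauchy--Binet formula yields
\[
|\det L_0|=\sum_{|S|=n}\det\Sigma_S\cdot (\det B_S)^2,
\]
where $S$ ranges over $n$-subsets of the columns. Each such $S$ defines a spanning subgraph of $X_{\fr}$ with exactly $n$ selected features, and for $\det B_S \neq 0$ each connected component $C$ with $k$ vertices must contribute exactly $k$ features of rank $k$. A vertex-edge count combined with the standard signed-graph matroid argument shows that the only possibilities for such a $C$ are a tree with exactly one null leg, a tree with exactly one odd leg, or a unicyclic graph whose unique cycle has odd parity product. Computing the corresponding determinants gives $(\det B_{S_C})^2\det\Sigma_{S_C}$ equal to $1$, $2$, and $4$ respectively, where the last value uses the classical computation that the signed incidence matrix of an odd cycle has determinant $\pm 2$ (the column sum is $\pm 2 e_v$ at the frustrated vertex). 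Under the natural bijection between admissible $S\subset H(X_{\fr})\sqcup(\text{null legs})$ and ogods $B$ of $X$ — where a type 1 ogod component with its unique dilated vertex of valency $j$ breaks into $j$ tree+null-leg components of $X_{\fr}|_S$, each of weight $1$, so the total weight $1^j=1$ is preserved — the per-component weight agrees with $w(Y)$ for each ogod component $Y$, and multiplicativity across components yields $\sum_B w(B)$.

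The main obstacle is the classification step, since one must verify that no other configurations (multiple cycles in a component, combinations of legs and cycles, even cycles, etc.) give a nonvanishing minor, and that the correspondence between ogod types in $X$ and the three component shapes in $X_{\fr}$ correctly accounts for the decomposition of dilated ogod components. Both follow from standard signed-graph matroid theory once the null legs are treated on the same footing as odd legs at the level of columns but distinguished from them in $\Sigma$.
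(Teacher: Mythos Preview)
Your approach is correct and parallels the paper's, but with a different factorization of $L_0$. The paper factors $L_0=DT$ where $D$ and $T$ are the $n\times |H(X_{\fr})|$ matrices of $r_{\fr}\circ(\Id-\iota_{\fr})$ and $\tau_{\fr}$ indexed by \emph{half-edges}; in the Cauchy--Binet sum it must then rule out subsets containing both half-edges of an edge, interpret the remaining subsets as \emph{oriented} edge/leg selections, and recover the factor $4$ for a unicyclic component as $2$ (from $\det D|_B\det T|_B$) times $2$ orientations of the cycle. Your symmetric factorization $L_0=B\Sigma B^t$ indexed by \emph{edges} and legs is the classical signed-graph incidence approach (in the spirit of~\cite{1982Zaslavsky}), and it short-circuits the orientation bookkeeping: the cycle weight $4$ appears directly as $(\det B_S)^2$, and the distinction between odd and null legs is absorbed into $\Sigma$ rather than into $D$. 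Both routes require the same component classification (tree with one null leg, tree with one odd leg, unbalanced unicycle) and the same surjectivity argument via null legs; yours is a bit cleaner, while the paper's stays within its half-edge formalism used throughout. One small point to make explicit in your bijection: isolated dilated vertices of $X|_B$ (type~1 components with $j=0$ incident edges in $B$) correspond to the empty configuration on the $X_{\fr}$ side and contribute weight $1$, which your ``$1^j=1$'' convention handles but is worth stating.
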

For free double covers, this result already occurs in~\cite{1982Zaslavsky}, and was explicitly interpreted as a formula for the order of the voltage Laplacian in~\cite{2014ReinerTseng}. It was subsequently independently derived by the second author in~\cite{2022LenZakharov}. We note that for a free double cover there is an additional $1/2$ coefficient in the right hand side of Equation~\eqref{eq:ogodformula}.

\begin{proof} Let $X_{\fr}$ be the free graph, and let $\ep$ be the parity assignment on $H(X_{\fr})$ defined above. By Proposition~\ref{prop:0isfree}, we may compute the voltage Laplacian $L_0=r_{\fr}\circ(\Id-\iota_{\fr})\circ \tau_{\fr}$ and voltage Jacobian $\Jac_0=(\Im r_{\fr}\circ(\Id-\iota_{\fr}))/\Im L_0$ using the diagram~\eqref{eq:freegraph} of $X_{\fr}$. Let $n=|V(X_{\fr})|$ and $m=|H(X_{\fr})|$. The $n\times n$ matrix of the voltage Laplacian factors as $L_0=DT$, where $D$ is the $n\times m$ matrix of $r_{\fr}\circ (\Id-\iota_{\fr})$ and $T$ is the $m\times n$ matrix of $\tau_{\fr}$:
\[
D_{vh}=\begin{cases} +1, & r(h)=v\mbox{ and }h\mbox{ lies on a non-loop edge or is a null leg},\\
+1, & r(\iota(h))=v\mbox{ and }h\mbox{ lies on an odd non-loop},\\
-1, & r(\iota(h))=v\mbox{ and }h\mbox{ lies on an even non-loop},\\
+2, & r(h)=v\mbox{ and }h\mbox{ lies on an odd loop or is an odd leg},\\
0, & \mbox{otherwise},
\end{cases}
\quad
T_{hv}=\begin{cases} +1, & v=r_{\fr}(h),\\
0, & \mbox{otherwise.}
\end{cases}
\]
By the Cauchy--Binet formula,
\begin{equation}
\det L_0=\sum_{B\subset H(X_{\fr}):|B|=n} \det D|_B\det T|_B,
\label{eq:CB}
\end{equation}
where we sum over all $n$-element subsets $B\subset H(X_{\fr})$ of half-edges of $X_{\fr}$ and where $D|_B$ and $T|_B$ are the matrices obtained from $D$ and $T$ by deleting respectively all columns and all rows except those indexed by $B$. 

We make a number of simple observations:

\begin{enumerate}
    \item $\det D|_B=0$ if $B$ contains a half-edge that lies on an even loop or is an even leg. Indeed, the corresponding column of $D$ is zero.
    \item $\det D|_B=0$ if $B$ contains both half-edges of a single edge $e=\{h,h'\}$. Indeed, the $h$- and $h'$-columns of $D$ are equal if $e$ is odd and sum to zero if $e$ is even. Hence we only consider only those $n$-element subsets $B\subset H(X_{\fr})$ that have at most one half-edge from each edge. We represent each such $B$ as a choice of a total of $n$ edges and legs, as well as an \emph{orientation} for each edge, in other words an arrow pointing in the direction of the chosen half-edge. 
    \item $\det T|_B=0$ unless each half-edge in $B$ is rooted at a distinct vertex of $X_{\fr}$. Viewing $B$ as a choice of oriented edges and legs, we require that each arrow point to a different vertex. 
\end{enumerate}

We now show that the nonzero contributions in Equation~\eqref{eq:CB} come from ogods, and that the contribution from each ogod $B$ is exactly $w(B)$. Fix $B$, and let $X_{\fr}|_B$ be the subgraph of $X$ obtained by deleting all edges and legs not in $B$. Let $X_{\fr}|_B=X_1\cup\cdots\cup X_k$ be the decomposition into connected components, and let $B_i=H(X_i)\cap B$ for $i=1,\ldots,k$. The matrices $D|_B$ and $T|_B$ are block-diagonal with blocks corresponding to the $X_i$, and a block-diagonal matrix has nonzero determinant only if each block is square, in other words if $|B_i|=|V(X_i)|$ for each $i$. In other words, the product $\det D|_B\det T|_B$ is nonzero only if each $X_i$ is a connected oriented graph having an equal number of legs and edges as vertices, with each leg and edge pointing to a distinct vertex. A moment's thought shows that there are only two possibilities for each $X_i$: 

\begin{enumerate}
    \item $X_i$ has a unique leg (odd or null but not even) and is a tree, and all edges are oriented away from the root vertex of the leg. Hence $X_i$ is an ogod component of weight $w(X_i)=1$ if the leg is null and $w(X_i)=2$ if the leg is odd. 
    \item $X_i$ has no legs and a unique cycle. The edges on the cycle are oriented cyclically, while the remaining edges (lying on trees attached to the cycle) are oriented away from the cycle. Hence $X_i$ is an ogod component of weight $w(X_i)=4$ if the preimage of the cycle is connected, which happens if an odd number of edges on the cycle are odd. If there is an even number of odd edges, then the preimage of the cycle is disconnected and $X_i$ is not an ogod.
    
\end{enumerate}

It is now an elementary linear algebra exercise to show that the product $\det D|_{B_i}\det T|_{B_i}$ equals $1$ or $2$ in the first case, depending on whether the unique leg is null or odd. Similarly, in the second case the product is equal to $2$ if there is an odd number of odd edges along the cycle and zero if there is an even number. In this case, there are two contributions corresponding to the two possible choices of orientation along the cycle. Hence we see that the total contribution of $\det D|_{B_i}\det T|_{B_i}$ from an ogod component $X_i$ is equal to $w(X_i)$. Since weights and determinants are multiplicative in connected components, it follows that the contribution of each ogod $B$ to the sum of the $\det D|_{B}\det T|_{B}$ (taken over the possible choices of orientations) is equal to $w(B)$.

We have shown that $\det L_0$ is equal to the right hand side of Equation~\eqref{eq:ogodformula}. To complete the proof, we show that the map $r_{fr}\circ (Id-\iota_{\fr}):\ZZ^{H(X_{fr})}\to \ZZ^{V(X_{\fr})}$ is surjective (this is in contrast to free double covers, where the image has index two). Again, we may pass to connected components and assume that $X_{\fr}$ is connected. Since the double cover $p:\tX\to X$ is dilated, there is at least one dilated vertex $v\in V(X)\backslash V(X_{\fr})$ connected by an undilated edge to an undilated vertex $u\in V(X_{\fr})$. Let $l\in L(X_{\fr})$ be the corresponding null leg rooted at $u$. By the proof of Proposition~\ref{prop:0isfree} we have $r_{fr}\circ (Id-\iota_{\fr})(l)=u$, so $u\in \Im (r_{fr}\circ (Id-\iota_{\fr}))$. Now let $e=\{h,h'\}\in E(X_{\fr})$ be an edge rooted at $r(h)=u$ and another vertex $r(h')=u'$. Again by the proof of Proposition~\ref{prop:0isfree} we have $r_{fr}\circ (Id-\iota_{\fr})(h)=u\pm u'$, and since $u\in (\Im r_{\fr}\circ (Id-\iota_{\fr}))$ we have $u'\in (\Im r_{fr}\circ (Id-\iota_{\fr}))$. Since $X_{\fr}$ is connected, we may proceed in this way and show that $w\in (\Im r_{fr}\circ (Id-\iota_{\fr}))$ for every generator $w$ of $\ZZ^{V(X_{\fr})}$. This completes the proof.

\end{proof}

\begin{example} We consider the two $\ZZ/2\ZZ$-quotients of the Petersen graph $P$ shown on Figure~\ref{fig:Petersen}. We recall that $\Jac(P)=\ZZ/2\ZZ\oplus (\ZZ/10\ZZ)^3$ and thus $|\Jac(P)|=2000$.

Taking the quotient by the order two subgroup $G\subset\Aut(P)$ generated by $(ab)$, we obtain the top center graph $P/G$. There are three undilated vertices $ac$, $ad$, and $ae$ and six undilated edges that we denote $E_u=\{e_{ac,de}, e_{ad,cd}, e_{ae,cd}, e_{ac,ad}, e_{ad,ae},e_{ac,ae}\}$. We consider the $20$ three-element subsets of $E_u$. If we remove the three edges of $P/G$ incident to $ac$, then the lone vertex $ac\in V(P/G)$ has disconnected preimage $p^{-1}(ac)=\{ac,bc\}$. Hence $B=\{e_{ac,de},e_{ac,ad},e_{ac,ae}\}$ is not an ogod, and the same is true for the tangent spaces to $ad$ and $ae$. The outside cycle $B=\{e_{ac,ad}, e_{ad,ae}, e_{ae,ac}\}$ lifts to a closed loop in $P$ and hence is an ogod of weight $4$. For each of the $16$ remaining $3$-element subsets $B\subset E_u$, every connected component of the graph $(P/G)|_B$ is a tree having a unique dilated vertex, hence $B$ is an ogod of weight $1$. Proposition~\ref{prop:jacobians} and Theorem~\ref{thm:Kirchhoff} imply that
\[
\frac{|\Jac(P)|}{|\Jac(P/\!/G)|}=|\Jac_0|=16+1\cdot 4=20.
\]
This agrees with Figure~\ref{fig:subgroups}, since $\Jac(P/\!/G)=(\ZZ/10\ZZ)^2$ and hence $|\Jac(P/\!/G)|=100$.

We also consider the order two subgroup $H\subset \Aut(P)$ generated by $(ab)(cd)$, the quotient graph for which is the top left graph in Figure~\ref{fig:Petersen}. The graph $P/\!/H$ has six undilated edges $E_u=\{e_{ab,ce}, e_{ac,ce}, e_{ac,ae}, e_{ad,ae}, e_{ad,ce},e_{ae,cd}\}$ and two odd legs $L=\{l_{ac},l_{ad}\}$. Out of the 70 4-element subsets of $E_u\cup L$, there are 46 ogods in 15 symmetry classes. Figure~\ref{fig:ogodtable} lists all ogods up to symmetry together with their weights. The total weight of all ogods is $100$, so by Proposition~\ref{prop:jacobians} and Theorem~\ref{thm:Kirchhoff} we have
\[
\frac{|\Jac(P)|}{|\Jac(P/\!/H)|}=|\Jac_0|=100
\]
This agrees with Figure~\ref{fig:subgroups}, since $\Jac(P/\!/G)=\ZZ/2\ZZ\oplus \ZZ/10\ZZ$ and hence $|\Jac(P/\!/G)|=20$.

which agrees with Figure~\ref{fig:subgroups} since $\Jac(P/\!/G)=(\ZZ/10\ZZ)^2$ and hence $|\Jac(P/G)|=100$.

\begin{figure}
    \centering

    \begin{tabular}{|c|c|c|} \hline
    ogod & number of symmetric ogods & weight \\
    \hline
    $\{e_{ac,ae},e_{ac,ce},e_{ad,ae},e_{ad,ce}\}$ & $1$ & $4$ \\
    \hline
    $\{e_{ab,ce},e_{ac,ae},e_{ac,ce},e_{ad,ae}\}$ & $4$ & $1$ \\
    \hline
    $\{e_{ab,ce},e_{ac,ae},e_{ac,ce},e_{ad,ce}\}$ & $4$ & $1$ \\
    \hline
    $\{e_{ab,ce},e_{ac,ae},e_{ad,ae},e_{ae,cd}\}$ & $2$ & $1$ \\
    \hline
    $\{e_{ab,ce},e_{ac,ae},e_{ad,ce},e_{ae,cd}\}$ & $2$ & $1$ \\
    \hline
    $\{l_{ac},e_{ac,ae},e_{ad,ae},e_{ad,ce}\}$ & $4$ & $2$ \\
    \hline
    $\{l_{ac},e_{ac,ae},e_{ac,ce},e_{ad,ae}\}$ & $4$ & $2$ \\
    \hline
    $\{l_{ac},e_{ab,ce},e_{ac,ae},e_{ad,ce}\}$ & $4$ & $2$ \\
    \hline
    $\{l_{ac},e_{ab,ce},e_{ad,ae},e_{ad,ce}\}$ & $4$ & $2$ \\
    \hline    
    $\{l_{ac},e_{ab,ce},e_{ac,ae},e_{ad,ae}\}$ & $4$ & $2$ \\
    \hline    
    $\{l_{ac},e_{ab,ce},e_{ad,ae},e_{ae,cd}\}$ & $4$ & $2$ \\
    \hline
    $\{l_{ac},l_{ad},e_{ac,ae},e_{ac,ce}\}$ & $2$ & $4$ \\
    \hline   
    $\{l_{ac},l_{ad},e_{ab,cd},e_{ac,ae}\}$ & $4$ & $4$ \\
    \hline  
    $\{l_{ac},l_{ad},e_{ac,ae},e_{ad,ce}\}$ & $2$ & $4$ \\
    \hline
    $\{l_{ac},l_{ad},e_{ab,ce},e_{ae,cd}\}$ & $1$ & $4$ \\
    \hline
    \end{tabular}
    \caption{Ogods of the quotient $P\to P/H$ of the Petersen graph for $H=\{1,(ab)(cd)\}$.}
    \label{fig:ogodtable}
\end{figure}

\end{example}

\bibliographystyle{amsalpha}
\bibliography{biblio}{}

\end{document}